\documentclass[centertags,12pt]{amsart}

\usepackage{amssymb}
\usepackage{latexsym}
\usepackage{amsfonts}
\usepackage{amssymb}
\usepackage{graphicx} 
\usepackage{pdflscape} 
\usepackage{diagbox} 
\usepackage{hyperref}


\textwidth = 16.00cm
\textheight = 22.00cm
\oddsidemargin = 0.12in
\evensidemargin = 0.12in

\setcounter{equation}{0}

\setlength{\parindent}{0pt}
\setlength{\parskip}{5pt plus 2pt minus 1pt}

\makeatletter
\renewcommand{\subsection}{\@startsection
{subsection}{2}{0mm}{\baselineskip}{-0.25cm}
{\normalfont\normalsize\em}}
\makeatother


\newtheorem{theorem}{Theorem}[section]
\newtheorem{proposition}[theorem]{Proposition}
\newtheorem{corollary}[theorem]{Corollary}
\newtheorem{lemma}[theorem]{Lemma}

{\theoremstyle{definition}

\newtheorem{example}[theorem]{Example}
\newtheorem{Ex2.1}[theorem]{Example 2.5 revisited}
\newtheorem{Ex2.2}[theorem]{Example 2.6 revisited}
\newtheorem{question}[theorem]{Question}
}

{\theoremstyle{remark}
\newtheorem{remark}[theorem]{Remark}

}

\def\1{\mathbf 1}

\def\Z{\mathbf Z}

\def\N {\mathbb{N}}

\def\a {\alpha}

\def\k {\kappa}
\def\l {\ell}

\DeclareMathOperator{\Imm}{Im}



\title[On Pure $\k$-sparse gapsets]{On pure $\k$-sparse gapsets}

\author[G. B. Almeida Filho]{Gilberto B. Almeida Filho}
\address{}
\email{gbrito.af19@gmail.com}

\author[M. Bernardini]{Matheus Bernardini}
\address{}
\email{matheusbernardini@unb.br}

\thanks{{\em 2020 Math. Subj. Class.}: Primary 20M14; Secondary 05A15, 05A19}

\thanks{{\em Keywords}: numerical semigroup, gapset, pure $\k$-sparse gapset, genus}

\sloppy

   \begin{document}


\maketitle

\begin{center}
{\sc Dedicated to the memory of Fernando Torres}
\end{center}

\begin{abstract}
In this paper, we study gapsets and we focus on obtaining information on how the maximum distance between two consecutive elements influences the behaviour of the set. In particular, we prove that the cardinality of the set of gapsets with genus $g$ such that the maximum distance between two consecutive elements is $\k$ is equal to the cardinality of the set of gapsets with genus $g+1$ such that the maximum distance between two consecutive elements is $\k+1$, when $2g \leq 3\k$.
\end{abstract}

\section{Introduction}
\label{s1}
A \textit{gapset} is a finite set $G \subset \N$ satisfying the following property. Let $z \in G$ and write $z = x + y$, with $x$ and $y \in \N$. Then $x \in G$ or $y \in G$. This concept was formally introduced by Eliahou and Fromentin \cite{EF}. Recall that a numerical semigroup $S$ is a submonoid of the set of non-negative integers $\N_0$, equipped with the usual addition, such that $G(S):=\N_0\setminus S$, the set of {\em gaps} of $S$, is finite (the book \cite{GS-R} is an excellent background on this subject). Hence, a gapset is nothing more than the set of gaps of some numerical semigroup. Throughout this paper, we denote $[a,b] := \{x \in \Z: a \leq x \leq b\}$ and $[a, \infty) := \{x \in \Z: x \geq a\}$, for integers $a$ and $b$.

The cardinality of a gapset $G$ is called the {\em genus} of $G$. Numerical semigroup properties imply that if $S$ is a numerical semigroup with genus $g$, then $[2g, \infty) \subset S$. Hence, a non-empty gapset $G$ with genus $g$ satisfies $G \subseteq [1,2g-1]$. Some invariants play an important role in this theory. For instance, the multiplicity, the conductor, the depth and the Frobenius number of a gapset are $m(G) := \min\{s \in \N: s \notin G\}$, $c(G) := \min\{s \in \N: s + n \notin G, \forall n \in \N_0\}, q(G) := \lceil \frac{c(G)}{m(G)} \rceil$ and $F(G) := c(G) - 1$, respectively.

The set of gapsets is denoted by $\Gamma$ and the set of gapsets with a fixed genus $g$ by $\Gamma(g)$, which has $n_g$ elements. The first few elements of the sequence $(n_g)$ are $1, 1, 2, 4, 7, 12, 23, 39, 67, 118$ and it is registered as the sequence A007323 at OEIS (the on-line encyclopedia of integer sequences). This sequence has been deeply studied after Bras-Amor\' os \cite{Amoros1} conjectured three items about its behaviour. The only one which is still an open question is: ``is it true that $n_g + n_{g+1} \leq n_{g+2}$, for all $g$?". Even the weaker version of this conjecture, namely ``is $(n_g)$ a non-decreasing sequence?" is an open problem. The other two questions are about the asymptotic behaviour of $(n_g)$, namely $\lim_{g \to \infty} \frac{n_{g+1}}{n_g} = \frac{1+\sqrt{5}}{2}$ (golden ratio) and  $\lim_{g \to \infty} \frac{n_{g+1} + n_g}{n_{g+2}} = 1$; both were proved by Zhai \cite{Zhai}. The main tool to prove those results is the fact that most of gapsets with fixed genus have depth at most 3. More precisely:

\paragraph{Zhai's theorem.}
Let $n_g' := \#\{G \in \Gamma(g): q(G) \leq 3\} $. Then
$$\lim_{g \to \infty} \frac{n_g'}{n_g} = 1.$$
For more details on this subject, we recommend the survey written by Kaplan \cite{Kaplan2}.

Some authors studied the set of numerical semigroups with fixed genus and some other invariants. For instance, Bernardini and Torres \cite{MF} studied numerical semigroups with fixed genus and fixed number of even gaps; Blanco and Rosales \cite{BR} studied numerical semigroups with fixed genus and fixed Frobenius number; Bras-Amor\' os \cite{Amoros3} studied numerical semigroups with fixed genus and fixed ordinarization number; Kaplan \cite{Kaplan} studied numerical semigroups with fixed genus and fixed multiplicity; all of them obtained some interesting partial results.

Throughout this paper, we write a gapset $G$ with genus $g$ as $G = \{\l_1 < \l_2 < \ldots < \l_g\}$ and its elements will always be enumerated in the increasing natural order. In this paper, we study gapsets with fixed genus and we focus on pure $\k$-sparse gapsets. A $\k$-sparse gapset $G$ is a gapset such that the distance between two consecutive elements (in the natural order) is at most $\k$, i.e., $\l_{j+1} - \l_j \leq \k$, for all $j$. A pure $\k$-sparse gapset $G$ is a $\k$-sparse gapset such that there are two consecutive elements $\l_i$ and $\l_{i+1}$ such that $\l_{i+1} - \l_i = \k$. By convention, we say that $\emptyset$ is a pure $0$-sparse gapset and $\{1\}$ is a pure $1$-sparse gapset. The set of pure $\k$-sparse gapsets with genus $g$ is denoted by $\mathcal{G}_\k(g)$.

Munuera, Torres and Villanueva \cite{MTV} studied the so-called sparse semigroups, which are numerical semigroups such that its pairs of consecutive gaps have distance at most 2. In particular, they prove that the set of Arf semigroups is a proper subset of the set of sparse semigroups. Tizziotti and Villanueva \cite{JG} studied $\k$-sparse and pure $\k$-sparse numerical semigroups, which naturally generalize the concept of sparse semigroups. Here, we choose using gapset theory because we use information about the set of gaps of a numerical semigroup, such as its cardinality, the maximum distance between two consecutive gaps and its depth.

Throughout, we use the following notation: if $G$ is a pure $\k$-sparse gapset with genus $g \geq 2$, we write $G = \{1 = \l_1 < \l_2 < \cdots < \l_\a < \l_{\a+1} = \l_\a + \k < \cdots < \l_g\}$, with the index $\a$ being the maximum element of the set $\{i: \l_{i+1} - \l_i = \k\}$.

Here is an outline of this paper. In section 2, we present some general properties of gapsets. We show how  the multiplicity, the conductor and the depth of a gapset are related to its genus. In section 3, we study pure $\k$-sparse gapsets and we check some properties in relation to the elements of a pure $\k$-sparse gapset. In this case, we prove that the maximum distance between the consecutive elements can be limited by its multiplicity, its genus and its conductor. In addition, when we have the hypothesis $2g \leq 3\k$, where $g$ is the genus of the gapset and $\k$ is the biggest distance between two consecutive elements, the gapset has exactly one pair of consecutive elements with distance $\k$ (except for one case) and it has depth at most 3. In section 4, we build an injective map $\phi$ (see equation (\ref{mapaphi}), in page 8) from the family $\mathcal{G}_\k(g)$ to the set of subsets of $[1, 2g + 1]$ and we look for conditions on $g$ and $\k$ so that the image of a gapset is again a gapset. In particular, we prove that every gapset with depth at most two and every pure $\k$-sparse gapset with genus $g$ such that $2g \leq 3\k$ satisfy this property. We emphasize that this map brings a new approach to this topic. In section 5, we prove that the application $\varphi: \mathcal{G}_\k(g) \to \mathcal{G}_{\k+1}(g+1), G \mapsto \varphi(G) = \phi(G)$ is surjective and therefore establishes a bijection between $\mathcal{G}_{\k}(g)$ and $\mathcal{G}_{\k+1}(g+1)$ whenever the condition $2g \leq 3\k$ is satisfied. Thus, we conclude that $\# \mathcal{G}_{k}(g) = \# \mathcal{G}_{k+n}(g+n)$, for all $n \in \N$. In section 6, we discuss some further questions related to this work.

\section{Some basic results on gapsets}
\label{s2}
We bring some familiar results on numerical semigroup theory to gapset theory and we keep the proofs for the sake of completeness. The next result tells us about the relation between multiplicity and genus of a gapset.

\begin{proposition}
If a non-empty gapset has genus $g$ and multiplicity $m$, then $2 \leq m \leq g+1$.
\label{mg}
\end{proposition}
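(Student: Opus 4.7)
The plan is to unpack the two bounds separately, both by direct appeal to the definition of a gapset together with the definition $m = m(G) := \min\{s \in \N : s \notin G\}$.

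For the lower bound $m \geq 2$, I would first show that every non-empty gapset contains $1$. Let $G$ be non-empty and let $z$ be its smallest element. If $z \geq 2$, then writing $z = 1 + (z-1)$ with $1, z-1 \in \N$ and invoking the gapset property forces $1 \in G$ or $z - 1 \in G$; the latter contradicts the minimality of $z$, so $1 \in G$, whence $z = 1$ anyway. In either case $1 \in G$, so $1$ is not counted in $\{s \in \N : s \notin G\}$, giving $m \geq 2$.

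For the upper bound $m \leq g+1$, I would observe that by the very definition of the minimum, every integer $s$ with $1 \leq s \leq m-1$ satisfies $s \in G$. Hence $\{1, 2, \ldots, m-1\} \subseteq G$, which yields $g = \#G \geq m - 1$, i.e., $m \leq g+1$.

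Neither step presents a real obstacle; the only subtlety is ensuring that the lower bound uses the gapset axiom rather than the (not-yet-invoked) bijection with numerical semigroups, which is why I would route the argument through the smallest element $z$ of $G$ and the decomposition $z = 1 + (z-1)$.
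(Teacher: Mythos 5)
Your proof is correct and follows essentially the same route as the paper: the upper bound comes from $[1,m-1]\subseteq G$ forcing $g \geq m-1$ (the paper phrases this contrapositively), and the lower bound from $1 \in G$. The only difference is that you justify $1 \in G$ explicitly via the decomposition $z = 1 + (z-1)$ of the smallest element, whereas the paper simply asserts it from $G \neq \emptyset$; your extra care here is harmless and arguably an improvement.
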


\begin{proof}
If $m \geq g+2$, then $[1,g+1] \subseteq G$ and $\# G \geq g+1$, which does not occur. Hence $m \leq g+1$. On the other hand, since $G \neq \emptyset$, then $1 \in G$ and $m \geq 2$.
\end{proof}

The next result gives us information about each element of a gapset and it can be found in \cite{Gilvan}.

\begin{proposition}
Let $G = \{\l_1 < \l_2 < \cdots < \l_g\}$ be a gapset with genus $g$. Then $j \leq \l_j \leq 2j-1$.
\label{gilvan}
\end{proposition}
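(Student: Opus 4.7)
The lower bound $\l_j \geq j$ is essentially immediate: since $\l_1, \l_2, \dots, \l_g$ are strictly increasing elements of $\N$, one has $\l_1 \geq 1$ and $\l_{j} \geq \l_{j-1} + 1$, so a one-line induction gives $\l_j \geq j$.

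For the upper bound $\l_j \leq 2j-1$ the natural move is to exploit the defining property of a gapset with $z = \l_j$: for every $x \in [1, \l_j - 1]$, the decomposition $\l_j = x + (\l_j - x)$ forces $x \in G$ or $\l_j - x \in G$. Note that $G \cap [1, \l_j - 1] = \{\l_1, \dots, \l_{j-1}\}$ has cardinality exactly $j-1$, so it suffices to produce a lower bound on this count in terms of $\l_j$.

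To this end I would pair the elements of $[1, \l_j - 1]$ under the involution $x \mapsto \l_j - x$. Each non-fixed pair $\{x, \l_j - x\}$ contributes at least one element to $G \cap [1, \l_j - 1]$ by the gapset condition. If $\l_j$ is odd, there are $(\l_j - 1)/2$ such pairs, giving $j - 1 \geq (\l_j - 1)/2$. If $\l_j$ is even, the involution has the fixed point $\l_j/2$, and since $\l_j = \l_j/2 + \l_j/2$ the gapset property forces $\l_j/2 \in G$ as well; together with the $\l_j/2 - 1$ non-fixed pairs this yields $j - 1 \geq \l_j/2$. Either way $2(j-1) \geq \l_j - 1$, that is, $\l_j \leq 2j - 1$.

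There is no serious obstacle; the only delicate point is the parity bookkeeping when $\l_j$ is even, where one must remember to separately account for the midpoint $\l_j/2$ that the involution fixes.
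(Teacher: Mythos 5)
Your proof is correct and follows essentially the same route as the paper's: a trivial induction (equivalently, a descent to $\l_1<1$) for the lower bound, and for the upper bound the pairing $x \mapsto \l_j - x$ with the gapset condition, split by the parity of $\l_j$ to handle the fixed point $\l_j/2$. The only cosmetic difference is that you count the $j-1$ elements of $G$ in $[1,\l_j-1]$ while the paper counts the $j$ elements in $[1,\l_j]$; the bookkeeping is identical.
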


\begin{proof}
If $\l_j < j$ for some $j$, then $\l_1 < 1$, which is a contradiction. On the other hand, let $j \in [1,g]$. If $\l_j$ is even, then $\frac{\l_j}{2} \in G$ and for each $\l \in [1, \frac{\l_j}{2} - 1]$, there is at least one element of $G$ in $\{\l, \l_j - \l\}$. There are $j$ elements of $G$ in $[1, \l_j]$, including $\frac{\l_j}{2}$ and $\l_j$, hence $j \geq (\l_j/2 - 1) + 2$ and we obtain $\l_j \leq 2j - 2 < 2j - 1$.  If $\l_j$ is odd, then for each $\l \in [1, \frac{\l_j-1}{2}]$, there is at least one element of $G$ in $\{\l, \l_j - \l\}$. There are $j$ elements of $G$ in $[1, \l_j]$, including $\l_j$, hence $j \geq \frac{\l_j-1}{2} + 1$ and we obtain $\l_j \leq 2j - 1$.
\end{proof}

The next result tells us about the relation between the conductor and the genus of a gapset.

\begin{proposition}
If a non-empty gapset has genus $g$ and conductor $c$, then $g+1 \leq c \leq 2g$.
\label{cond}
\end{proposition}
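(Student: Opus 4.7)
The plan is to express the conductor in terms of the largest element $\ell_g$ of $G$ and then invoke Proposition \ref{gilvan}, which already pins down the size of $\ell_g$.

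First I would verify that $c = \ell_g + 1$. By definition, $c = \min\{s \in \N : s+n \notin G \text{ for all } n \in \N_0\}$, so every integer $\geq c$ lies outside $G$. If $c - 1$ were also outside $G$ (and still in $\N$, which holds since $G \neq \emptyset$ forces $1 \in G$ and therefore $c \geq 2$), then $c-1$ would itself satisfy the defining property, contradicting the minimality of $c$. Hence $c-1 \in G$, and since no integer exceeding $c-1$ belongs to $G$, it must be the largest element of $G$, namely $\ell_g$. This gives $c = \ell_g + 1$, which also matches the identity $F(G) = c(G) - 1$ stated in the introduction.

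Once this identification is in hand, the bounds are immediate from Proposition \ref{gilvan} applied at the index $j = g$: we have $g \leq \ell_g \leq 2g - 1$, and adding $1$ yields $g + 1 \leq c \leq 2g$, as desired.

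There is essentially no obstacle here; the only thing to be careful about is ensuring $c - 1 \in \N$ before concluding it lies in $G$, which uses the non-emptiness hypothesis (so that the conductor is not $1$). The substantive content of the proposition is entirely carried by Proposition \ref{gilvan}; this result is just the reformulation of that bound in terms of the conductor.
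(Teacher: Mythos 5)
Your proof is correct and follows essentially the same route as the paper: both obtain the upper bound $c \leq 2g$ from Proposition \ref{gilvan} via the identification of $c-1$ with the largest element $\ell_g$, and your careful justification that $c = \ell_g + 1$ is a welcome explicit step that the paper leaves implicit. The only cosmetic difference is in the lower bound, where the paper argues directly that $c \leq g$ would force $G \subseteq [1,g-1]$ and hence $\#G \leq g-1$, while you instead invoke the bound $\ell_g \geq g$ from Proposition \ref{gilvan}; the two arguments are interchangeable.
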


\begin{proof}
If $c \leq g$, then $G \subseteq [1,g-1]$ and $\# G \leq g-1$, which does not occur. Hence, $c \geq g+1$. On the other hand, Proposition \ref{gilvan} ensures that $\l_g \leq 2g - 1$ and thus $c \leq 2g$.
\end{proof}

As a consequence, we obtain that if a non-empty gapset has genus $g$ and Frobenius number $F$, then $g \leq F \leq 2g-1$. Putting together Propositions \ref{mg} and \ref{cond}, we obtain a relation between the depth and the genus of a gapset.

\begin{corollary}
If a non-empty gapset has genus $g$ and depth $q$, then $1 \leq q \leq g$.
\label{dep}
\end{corollary}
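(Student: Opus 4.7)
The plan is simply to combine the bounds already established in Propositions \ref{mg} and \ref{cond} with the definition $q(G) = \lceil c(G)/m(G) \rceil$.

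For the lower bound, I would note that since $G$ is non-empty, Proposition \ref{mg} gives $m \leq g+1$ while Proposition \ref{cond} gives $c \geq g+1$. Hence $c \geq m$, so $c/m \geq 1$ and therefore $q = \lceil c/m \rceil \geq 1$.

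For the upper bound, I would use the other halves of the same propositions: $m \geq 2$ and $c \leq 2g$. These together yield
\[
\frac{c}{m} \leq \frac{2g}{2} = g,
\]
and since $g$ is an integer, $q = \lceil c/m \rceil \leq g$.

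There is no real obstacle here; the corollary is a direct packaging of Propositions \ref{mg} and \ref{cond}, which is exactly why the authors state it as a corollary rather than as a theorem. The only mild care needed is to observe that the inequalities on $c$ and $m$ combine in the right direction (smallest $m$ with largest $c$ for the upper bound, largest $m$ with smallest $c$ for the lower bound), but both combinations are legitimate because the bounds on $c$ and $m$ hold simultaneously for every non-empty gapset.
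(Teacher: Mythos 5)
Your proof is correct and follows exactly the same route as the paper: combining $m \leq g+1$ with $c \geq g+1$ for the lower bound and $m \geq 2$ with $c \leq 2g$ for the upper bound, then applying the ceiling. Your explicit remark that $\lceil c/m\rceil \leq g$ because $g$ is an integer is a small point the paper leaves implicit, but the argument is the same.
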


\begin{proof}
By applying Propositions \ref{mg} and \ref{cond}, we obtain
$$1 = \frac{g+1}{g+1} \leq \frac{c}{m} \leq \frac{2g}{2} = g.$$
Hence, $1 \leq q \leq g$.
\end{proof}

Next, we present three classical examples. In particular, Examples \ref{q=1} and \ref{q=g} confirm that the bounds presented in Propositions \ref{mg}, \ref{gilvan}, \ref{cond} and in Corollary \ref{dep} are accurate.

\begin{example}
The trivial gapset $\emptyset$ has genus $0$, multiplicity $1$, conductor $0$ and depth $0$. It is the unique gapset with depth 0.
\label{q=0}
\end{example}

\begin{example}
Let $g \in \N$. The gapset $[1,g]$ has genus $g$, multiplicity $g+1$, conductor $g+1$ and depth $1$. There are no other gapsets with depth 1. Following the numerical semigroup terminology, those gapsets are \textit{ordinary gapsets}.
\label{q=1}
\end{example}

\begin{example}
Let $g \in \N$. The gapset $(2\N + 1) \cap [1,2g-1]$ has genus $g$, multiplicity $2$, conductor $2g$ and depth $g$. There are no other gapsets with depth $g$. Following numerical semigroup terminology, those gapsets are \textit{hyperelliptic gapsets}.
\label{q=g}
\end{example}

We end up this section with the following result.

\begin{proposition}
Let $G = \{\l_1 < \l_2 < \ldots < \l_g\}$ be a gapset with multiplicity $m$ and genus $g$. Then $[am + \l_j + 1, am + \l_{j+1} - 1] \cap G = \emptyset$ for all $a \in \N_0$ and for all $j \in [1,g-1]$.
\label{gilberto_partition}
\end{proposition}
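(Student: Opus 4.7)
The plan is a proof by contradiction that pins down where a hypothetical element of $G$ in the forbidden interval would have to lie, and then eliminates it using the defining property of a gapset together with the minimality of the multiplicity. The case $a = 0$ is vacuous, since $[\ell_j + 1, \ell_{j+1} - 1] \cap G = \emptyset$ follows directly from $\ell_j$ and $\ell_{j+1}$ being consecutive elements of $G$. So the substantive case is $a \geq 1$.

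Assume $a \geq 1$ and suppose toward a contradiction that $z \in G$ satisfies $am + \ell_j < z < am + \ell_{j+1}$. Write $z = am + w$ with $\ell_j < w < \ell_{j+1}$. Consecutiveness of $\ell_j, \ell_{j+1}$ in $G$ forces $w \notin G$, and both summands lie in $\N$ (we have $am \geq m \geq 2$ and $w \geq \ell_j + 1 \geq 2$). The defining property of a gapset, applied to the decomposition $z = am + w$, therefore forces $am \in G$.

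The finishing step is to contradict $am \in G$. By definition of the multiplicity, $m \notin G$. Iterating the gapset axiom on the decomposition $am = m + (a-1)m$ yields $(a-1)m \in G$, and a straightforward downward induction on $a$ brings us to $m \in G$, a contradiction. (Equivalently, and more conceptually, $S := \N_0 \setminus G$ is a numerical semigroup, $m \in S$, and $S$ is closed under addition, so every multiple $am$ lies in $S$ and therefore not in $G$.)

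I do not anticipate any genuine obstacle here: the entire argument rests on a single application of the gapset axiom to the decomposition $z = am + w$, combined with the semigroup-closure fact $m \notin G \Rightarrow am \notin G$ for all $a \in \N_0$.
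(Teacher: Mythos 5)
Your proof is correct and follows essentially the same route as the paper: decompose an element of the forbidden interval as $am + w$ with $\l_j < w < \l_{j+1}$, note that neither summand lies in $G$, and invoke the gapset axiom. The only difference is cosmetic --- the paper argues by contraposition and takes the standard fact $am \notin G$ for granted, while you phrase it as a contradiction and spell out the induction showing multiples of $m$ avoid $G$.
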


\begin{proof}
Let $a \in \N_0$ and $j \in [1,g-1]$. If $b \in [am + \l_j + 1, am + \l_{j+1} - 1]$, then  $b = am + \l_j + c$, with $c \in [1, \l_{j+1} - \l_j - 1]$. Since $am \notin G$ and $\l_j + c \notin G$, then $b \notin G$.
\end{proof}

\section{Pure $\k$-sparse gapsets}
\label{s3}

Following the notation for numerical semigroups, a gapset $G$ is \textit{$\k$-sparse} if the difference between two consecutive elements of $G$ (in the natural order) is at most $\k$. If there are two consecutive elements $\l_{i}$ and $\l_{i+1} \in G$ such that $\l_{i+1} - \l_i = \k$, then we say that $G$ is a pure $\k$-sparse gapset. Let $\mathcal{G}_{\k}$ be the set of all pure $\k$-sparse gapsets and $\tilde{\mathcal{G}}_{\k}$ be the set of all $\k$-sparse gapsets. In particular,  $\mathcal{G}_{\k} \subset \tilde{\mathcal{G}_{\k}}$ and $\tilde{\mathcal{G}_{\k}} \subset \tilde{\mathcal{G}}_{\k+1}$. In this section, we study the set of pure $\k$-sparse gapsets with genus $g$, namely $\mathcal{G}_{\k}(g) := \Gamma(g) \cap \mathcal{G}_{\k}$ and its subset $\mathcal{G}_{\k}(g, q) := \{G \in \mathcal{G}_{\k}(g): q(G) = q\}$. By convention, we say that $\emptyset$ is a $0$-sparse gapset and $\{1\}$ is a $1$-sparse gapset. We recall that if $G$ is a pure $\k$-sparse gapset with genus $g \geq 2$, then we write $G = \{\l_1 < \l_2 < \cdots < \l_\a < \l_{\a+1} = \l_\a + \k < \cdots < \l_g\}$, where $\a = \max\{i: \l_{i+1} - \l_i = \k\}$.

\begin{example}
Let $G = \{\l_1 < \l_2 < \ldots < \l_g\}$ be a pure $1$-sparse gapset. Then $\l_{i+1} - \l_i = 1$, for all $i$. Hence, $G = \{1, 2, \ldots, g\}$ is an ordinary gapset.
\end{example}

Now we obtain a relation between the multiplicity of a gapset and the maximum distance between two consecutive elements.

\begin{proposition}
Let $G$ be a pure $\k$-sparse gapset with multiplicity $m$. Then $\k \leq m$.
\label{km}
\end{proposition}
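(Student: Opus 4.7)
The plan is to prove the inequality by contradiction, exploiting the defining sum-closure property of a gapset applied to one of the two ``far apart'' consecutive elements.

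First I would dispose of the degenerate cases $g = 0$ and $g = 1$ by the stated conventions: for $G = \emptyset$ we have $\k = 0$ and $m = 1$, and for $G = \{1\}$ we have $\k = 1$ and $m = 2$, so $\k \leq m$ holds trivially. From then on I assume $g \geq 2$ and I take the pair of consecutive elements $\ell_i, \ell_{i+1} \in G$ witnessing purity, i.e.\ $\ell_{i+1} - \ell_i = \k$.

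Now suppose for contradiction that $\k \geq m+1$. Proposition \ref{mg} gives $m \geq 2$, so in particular $\ell_{i+1} \geq \ell_i + \k \geq 1 + m + 1 \geq m+2$, hence $\ell_{i+1} - m \geq 2 \geq 1$. Writing $\ell_{i+1} = (\ell_{i+1} - m) + m$ with both summands positive integers and invoking the gapset property on $\ell_{i+1} \in G$, we conclude $\ell_{i+1} - m \in G$ or $m \in G$. Since $m = m(G) \notin G$ by definition, the first alternative must hold: $\ell_{i+1} - m \in G$.

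Finally I would observe that the assumption $\k \geq m+1$ forces $\ell_{i+1} - m = \ell_i + \k - m \geq \ell_i + 1$, while $m \geq 2$ gives $\ell_{i+1} - m \leq \ell_{i+1} - 2 < \ell_{i+1}$. So $\ell_{i+1} - m$ is an element of $G$ strictly between $\ell_i$ and $\ell_{i+1}$, contradicting the assumption that these two are consecutive in $G$. This contradiction forces $\k \leq m$, completing the argument. There is no real obstacle here: the only point to be careful about is checking $\ell_{i+1} - m \geq 1$ so that the sum decomposition is admissible in the gapset definition, which is immediate from Proposition \ref{mg} and the standing assumption $g \geq 2$.
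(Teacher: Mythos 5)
Your proof is correct, and it is a genuine (if mild) streamlining of the paper's argument rather than a verbatim match. Both proofs argue by contradiction from $\k > m$ and both apply the sum-closure property to $\l_{i+1}$ decomposed as a remainder plus a multiple of $m$; the difference is in which multiple is subtracted and where the contradiction lands. The paper chooses $a \geq 1$ so that $\l_{i+1} - am$ falls inside the window $[\l_i+1, \l_i+m]$, which is disjoint from $G$, and then contradicts the gapset axiom directly since both summands $\l_{i+1}-am$ and $am$ lie outside $G$ (this tacitly uses that every positive multiple of $m$ is a non-gap). You instead take the single subtraction $\l_{i+1} = (\l_{i+1}-m) + m$, use $m \notin G$ to \emph{force} $\l_{i+1}-m \in G$, and then contradict the consecutiveness of $\l_i$ and $\l_{i+1}$, since $\k \geq m+1$ and $m \geq 2$ place $\l_{i+1}-m$ strictly between them. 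Your route avoids both the choice of $a$ and the auxiliary fact that $am \notin G$ for all $a \geq 1$, at the cost of nothing; your handling of the degenerate cases $g \in \{0,1\}$ via the stated conventions and the check that $\l_{i+1}-m \geq 1$ are both careful and correct.
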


\begin{proof}
Let $\l_i$ and $\l_{i+1} \in G$ such that $\l_{i+1} - \l_i = \k$. If $\k > m$, then $\l_i + 1, \l_i + 2, \ldots, \l_i + m \notin G$ ($\l_i + m < \l_{i+1}$). There is $a \in \Z$ such that $\l_i + 1 \leq \l_{i+1} - am \leq \l_i + m$.  Notice that $G \ni \l_{i+1} = (\l_{i+1} - am) + am$. But $\l_{i+1} - am \notin G$ and $am \notin G$, which leads to a contradiction.
\end{proof}

At first, Proposition \ref{km} together with Proposition \ref{mg} imply that a pure $\k$-sparse gapset with genus $g$ satisfies $\k \leq g + 1$. We obtain a better result in the next proposition.

\begin{proposition}
Let $G$ be a pure $\k$-sparse gapset with genus $g$. Then $\k \leq g$.
\end{proposition}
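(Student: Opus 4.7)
The plan is to first apply the easy combination of Proposition~\ref{km} (giving $\k \leq m$) with Proposition~\ref{mg} (giving $m \leq g+1$), which already yields the weaker bound $\k \leq g+1$. The remaining task is to rule out equality $\k = g+1$, and the idea is that equality forces $G$ to be the ordinary gapset $[1,g]$, which then has trivial consecutive-distance structure and so cannot be pure $\k$-sparse for such a large $\k$.

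I would start by disposing of the small cases $g \in \{0,1\}$ via the stated conventions: $\emptyset$ is pure $0$-sparse and $\{1\}$ is pure $1$-sparse, so $\k \leq g$ holds trivially in both.

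For $g \geq 2$, I would argue by contradiction, assuming $\k = g+1$. Then Propositions \ref{km} and \ref{mg} together give
\[
g+1 \;=\; \k \;\leq\; m \;\leq\; g+1,
\]
so $m = g+1$. By definition of the multiplicity, this means $1, 2, \ldots, g$ all lie in $G$, i.e.\ $[1,g]\subseteq G$; since $|G| = g$, we actually have $G = [1,g]$ (the ordinary gapset of Example~\ref{q=1}). But then every pair of consecutive elements of $G$ satisfies $\l_{i+1} - \l_i = 1$, so the maximum consecutive distance in $G$ equals $1$, not $\k = g+1 \geq 3$. This contradicts $G$ being pure $\k$-sparse, completing the proof.

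There is no real obstacle here; the content is essentially the observation that Proposition~\ref{mg}'s upper bound $m = g+1$ is only attained by the ordinary gapset, whose pure-sparse type is forced to be $1$.
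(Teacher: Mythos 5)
Your proof is correct, but it takes a genuinely different route from the paper. The paper's own argument is a direct counting one: assuming $\k \geq g+1$, the gap between the two consecutive elements $\l_i$ and $\l_{i+1}$ with $\l_{i+1}-\l_i = \k$ contains the $g$ integers $\l_i+1,\dots,\l_i+g$, all missing from $G$ yet lying in $[1,2g-1]$, so $\#G \leq (2g-1)-g = g-1$, a contradiction. You instead push the combination of Propositions~\ref{km} and~\ref{mg} (which the paper itself notes only gives $\k \leq g+1$) one step further by analyzing the equality case: $\k = g+1$ forces $m = g+1$, which by the definition of multiplicity and the cardinality constraint forces $G = [1,g]$, whose maximal consecutive gap is $1$, contradicting pure $(g+1)$-sparseness for $g \geq 2$. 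Both arguments are sound; the paper's is self-contained and purely about counting missing integers in $[1,2g-1]$, while yours is shorter modulo the earlier propositions and has the mild bonus of identifying exactly why the bound $\k \leq g+1$ cannot be attained (the extremal multiplicity is achieved only by the ordinary gapset). Your handling of the small cases $g \in \{0,1\}$ via the stated conventions is appropriate, since Proposition~\ref{km} implicitly requires a genuine pair of consecutive elements realizing the distance $\k$.
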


\begin{proof}
Let $\l_i$ and $\l_{i+1} \in G$ such that $\l_{i+1} - \l_i = \k$. If $\k \geq g+1$, then $\l_i + 1, \l_i + 2, \ldots, \l_i + g \in [1,2g-2] \cap (\Z \setminus G)$ ($\l_i + g < \l_{i+1} \leq \l_{g}$). Hence, \linebreak $\# G \leq (2g - 2 - g) + 1 = g - 1$, which is a contradiction.
\end{proof}

The next result improves the upper bound on Proposition \ref{cond}.

\begin{proposition}
Let $G$ be a pure $\k$-sparse gapset with conductor $c$ and genus $g$. Then $g + \k \leq c$.
\label{kF}
\end{proposition}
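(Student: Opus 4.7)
The plan is to reduce the statement to a lower bound on $\l_g$, the largest element of $G$, since by definition of the conductor we have $c = F(G) + 1 = \l_g + 1$ whenever $G$ is non-empty. Concretely, once I know $\l_g \geq g + \k - 1$, the conclusion $c \geq g + \k$ follows immediately.

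To prove $\l_g \geq g + \k - 1$, I would exploit the telescoping sum
\[
\l_g = \l_1 + \sum_{j=1}^{g-1}(\l_{j+1} - \l_j).
\]
Since $G$ is non-empty and $G$ is a gapset, one has $1 \in G$, hence $\l_1 = 1$. Each of the $g-1$ consecutive differences $\l_{j+1}-\l_j$ is a positive integer, hence at least $1$; moreover, by the hypothesis that $G$ is \emph{pure} $\k$-sparse, there exists some index $i$ for which $\l_{i+1}-\l_i = \k$. Separating that single term in the sum and bounding the remaining $g-2$ terms from below by $1$ gives
\[
\l_g \;\geq\; 1 + (g-2)\cdot 1 + \k \;=\; g + \k - 1,
\]
and therefore $c = \l_g + 1 \geq g + \k$, as desired.

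The only small points that need a sentence or two of care are the degenerate small-genus instances covered by the authors' conventions: for $g=0$ we have $G=\emptyset$, $\k = 0$, $c = 0$; for $g=1$ we have $G=\{1\}$, $\k = 1$, $c = 2$; in both cases the inequality $g+\k \leq c$ is a direct check, and the telescoping argument above is used only for $g \geq 2$. There is no real obstacle here — the inequality is genuinely a one-line consequence of the pure $\k$-sparse definition combined with Proposition~\ref{gilvan} (which already gave $\l_1 = 1$ for non-empty $G$); the only thing to be slightly careful about is not to double-count the jump of size $\k$ when bounding the other differences from below.
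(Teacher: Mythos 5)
Your proof is correct and rests on the same combinatorial fact as the paper's: the single jump of size $\k$ forces $\l_g \geq g + \k - 1$, hence $c = \l_g + 1 \geq g + \k$. The paper packages this as a proof by contradiction (assuming $c \leq g+\k-1$ and counting the $\k-1$ integers missing from $[\l_\a+1,\l_{\a+1}-1]$ to get $\#G \leq g-1$), whereas you obtain the same bound directly by telescoping the consecutive differences; the content is identical.
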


\begin{proof}
Suppose that $g + \k > c$. By writing $G = \{\l_1 < \l_2 < \cdots < \l_\a < \l_{\a+1} = \l_\a + \k < \cdots < \l_g\}$, we conclude that $\l_g \leq g + \k - 2$. We have that \linebreak $I = [\l_\a + 1, \l_{\a+1} - 1] \cap G = \emptyset$, $\#I = \k-1$ and $G \subset [1, g + \k - 2]$. Hence, $\#G \leq (g+ \k -2) - (\k-1) = g - 1$, which is a contradiction.
\end{proof}

\begin{example}
The ordinary gapset with genus $g$ has multiplicity $g+1$, conductor $g+1$ and $\k = 1$. The hyperelliptic gapset  with genus $g$ has multiplicity $2$, conductor $2g$ and $\k = 2$. Notice that they do not attain the maximum $\k$ with respect to the multiplicity and the genus. The ordinary gapset attains the bound $g + \k = c$. The gapset $G = \{1, 2, \ldots, g-1, 2g-1\}$ has genus $g$, multiplicity $g$ and $\k = g$, i. e., it attains the maximum value of $\k$ with respect to the multiplicity and with respect to the genus. 
\end{example}

\begin{remark}
If $G = \{\l_1 < \l_2 < \ldots < \l_g\}$ is a pure $\k$-sparse gapset with genus $g$ such that $\l_{i+1} - \l_i = \k$, then $\k \leq i+1$. This follows from Proposition \ref{gilvan}, since $\k = \l_{i+1} - \l_i \leq 2i+1 - i = i+1$.
\end{remark}

The next result is important for the proofs of the main results of this paper. It confirms that the Frobenius number of a gapset cannot be too big with respect to its element $\l_\a$.

\begin{proposition}
Let $G = \{\l_1 < \l_2 < \cdots < \l_\a < \l_{\a+1} = \l_\a + \k < \cdots < \l_g\}$ be a pure $\k$-sparse gapset with genus $g$, where $\a = \max\{i: \l_{i+1} - \l_i = \k\}$. Then $\l_g \leq \l_\a + m$.
\label{abaixo}
\end{proposition}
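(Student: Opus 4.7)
The plan is to argue by contradiction, assuming $\l_g > \l_\a + m$ and locating the consecutive pair of $G$ that straddles the threshold $\l_\a + m$. Two results from Section~\ref{s2} feed in: Proposition~\ref{km} gives $\k \le m$, and Proposition~\ref{gilberto_partition} applied with $a = 1$ and the index $\a$ produces
\[
  [\l_\a + m + 1,\; \l_\a + m + \k - 1] \cap G = \emptyset,
\]
an interval of $\k - 1$ consecutive integers immediately above $\l_\a + m$ that contains no element of $G$.

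First dispose of the boundary case $\a = g - 1$: then $\l_g = \l_\a + \k \le \l_\a + m$ by Proposition~\ref{km}, and the claim holds. Otherwise, set
\[
  j := \max\{i : \l_i \le \l_\a + m\}.
\]
Because $\l_{\a+1} = \l_\a + \k \le \l_\a + m$, we have $j \ge \a + 1$; because we are assuming $\l_g > \l_\a + m$, we have $j \le g - 1$, so $\l_{j+1}$ exists and satisfies $\l_{j+1} > \l_\a + m$. The forbidden interval displayed above then forces $\l_{j+1} \ge \l_\a + m + \k$. On the other hand, the maximality of $\a$ in $\{i : \l_{i+1} - \l_i = \k\}$ together with the $\k$-sparseness of $G$ gives $\l_{j+1} - \l_j \le \k - 1$, and combining this with $\l_j \le \l_\a + m$ yields $\l_{j+1} \le \l_\a + m + \k - 1$, the desired contradiction.

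The main (and essentially only) obstacle is recognizing that Proposition~\ref{gilberto_partition} at $a = 1$ and $j = \a$ delivers exactly the forbidden interval one needs; once that interval is in hand, the rest is index bookkeeping driven by the inequality $\k \le m$.
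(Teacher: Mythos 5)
Your proof is correct and follows essentially the same route as the paper's: both argue by contradiction, locate the first element of $G$ exceeding $\l_\a+m$, and combine Proposition~\ref{gilberto_partition} (with $a=1$, index $\a$) with $\k\le m$ and the maximality of $\a$. The only cosmetic difference is the endgame: the paper pins down the offending index as exactly $\a+1$ and contradicts $\l_{\a+1}\le\l_\a+m$, while you contradict the gap size $\l_{j+1}-\l_j$ directly; the arithmetic content is identical.
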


\begin{proof}
Suppose that $\l_g > \l_\a + m$ and let $r$ be the smallest index such that $\l_r > \l_\a + m$. By Proposition \ref{gilberto_partition}, we conclude that $\l_r \geq \l_{\a+1} + m$ and thus $\l_r - \l_{r-1} \geq \k$. Since $G$ is a pure $\k$-sparse gapset, we have $\l_r - \l_{r-1} = \k$ and thus $r \geq \alpha + 1$. On the other hand, $r \leq \alpha + 1$. In fact, we know that $\l_j \leq \l_{\a} < \l_{\a} + m$, for all $j \in [1,\a]$. By the definition of $\a$, we know that $\a$ is the greatest index such that $\l_{i+1} - \l_i = \k$ and thus $r \leq \a +1$. Hence, the only possibility is $r = \a + 1$. However $\l_{\a+1} = \l_\a + \k \leq \l_\a + m$ and we obtain a contradiction. Hence, $\l_g \leq \l_\a + m$.
\end{proof}

Recall that the canonical partition of a gapset $G$ with multiplicity $m$ and depth $q$ is $G_0 \cup G_1 \cup \ldots \cup G_{q-1}$, where $G_0 =[1, m-1]$ and $G_i = G \cap [im + 1, (i+1)m - 1]$ (cf. \cite{EF}). As a consequence of Proposition \ref{abaixo}, we obtain information about where the element $\l_\a$ can be. 


\begin{corollary}
Let $G = \{\l_1 < \l_2 < \cdots < \l_\a < \l_{\a+1} = \l_\a + \k < \cdots < \l_g\}$ be a pure $\k$-sparse gapset with genus $g$, where $\a = \max\{i: \l_{i+1} - \l_i = \k\}$, and consider its canonical partition $G_0 \cup G_1 \cup \cdots \cup G_{q-1}$. Then one of the following occurs:
  \begin{itemize}
    \item[$\bullet$] $\l_\a, \l_{\a+1} \in G_{q-2}$
    \item[$\bullet$] $\l_\a, \l_{\a+1} \in G_{q-1}$
    \item[$\bullet$] $\l_\a \in G_{q-2}$ and $\l_{\a+1} \in G_{q-1}$
  \end{itemize}
\label{trescasos}
\end{corollary}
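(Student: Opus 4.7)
The plan is to use Proposition \ref{abaixo} together with the known location of $\l_g$ in the canonical partition to pin down where $\l_\a$ (and hence $\l_{\a+1}$) can sit, then rule out the fourth \emph{a priori} case $\l_\a \in G_{q-1}$ and $\l_{\a+1} \in G_{q-2}$ (which would contradict $\l_\a < \l_{\a+1}$).

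First I would record the range of $\l_g$: since $\l_g = F(G)$ and $q = \lceil c/m \rceil$, we have $(q-1)m < c \leq qm$, and because $\l_g \in G$ while $(q-1)m \notin G$, it follows that $(q-1)m + 1 \leq \l_g \leq qm - 1$, i.e.\ $\l_g \in G_{q-1}$. Applying Proposition \ref{abaixo} then gives
$$\l_\a \geq \l_g - m \geq (q-1)m + 1 - m = (q-2)m + 1.$$
On the other hand, $\l_\a < \l_{\a+1} \leq \l_g \leq qm - 1$, so $\l_\a \leq qm - 2$. Since $\l_\a \in G$ and $(q-1)m$ is a multiple of the multiplicity (hence in the semigroup, not in $G$), we conclude $\l_\a \in G_{q-2} \cup G_{q-1}$.

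Next I would split on these two possibilities and control $\l_{\a+1}$. If $\l_\a \in G_{q-1}$, then $\l_\a \geq (q-1)m + 1$ and so $\l_{\a+1} > (q-1)m$, while $\l_{\a+1} \leq \l_g \leq qm - 1$; thus $\l_{\a+1} \in G_{q-1}$. If instead $\l_\a \in G_{q-2}$, then $\l_{\a+1} = \l_\a + \k \leq \l_\a + m$ by Proposition \ref{km}, hence $\l_{\a+1} \leq (q-1)m - 1 + m = qm - 1$; since also $\l_{\a+1} > \l_\a \geq (q-2)m + 1$ and $\l_{\a+1} \neq (q-1)m$ (as $\l_{\a+1} \in G$), we get $\l_{\a+1} \in G_{q-2} \cup G_{q-1}$. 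These subcases correspond precisely to the three alternatives of the statement.

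There is no serious obstacle here: the entire argument is a careful bookkeeping of intervals, and the only substantive input is Proposition \ref{abaixo} (which forces $\l_\a$ to sit within distance $m$ below $\l_g$) together with Proposition \ref{km} (which gives $\k \leq m$ so that $\l_\a$ and $\l_{\a+1}$ lie in adjacent blocks of the canonical partition at worst). The most delicate point is just remembering that multiples of $m$ are never in $G$, which is what prevents $\l_\a$ or $\l_{\a+1}$ from lying on the boundary $(q-1)m$ between $G_{q-2}$ and $G_{q-1}$.
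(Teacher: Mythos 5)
Your proof is correct and follows essentially the same route as the paper's: both rest on Proposition \ref{abaixo} together with the observation that $\l_g \in G_{q-1}$ to place $\l_\a$ in $G_{q-2} \cup G_{q-1}$, and then read off the possible locations of $\l_{\a+1}$. You simply spell out the interval bookkeeping (and the fact that multiples of $m$ are never gaps) that the paper leaves implicit.
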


\begin{proof}
From Proposition \ref{abaixo}, we conclude that $\l_\a \in G_{q-2}$ or  $\l_\a \in G_{q-1}$, because $\l_g \in G_{q-1}$. For the first case, both possibilities can occur: $\l_{\a+1} \in G_{q-2}$ or $\l_{\a+1} \in G_{q-1}$. For the second case, we must have $\l_{\a+1} \in G_{q-1}$ and we are done.
\end{proof}

The next example illustrates that the three cases obtained in Corollary \ref{trescasos} can occur. For emphasizing, the last pair of consecutive elements of a gapset that attains the maximum distance will be written in bold.

\begin{example}
For an integer $m \geq 3$, we have the following examples:
  \begin{itemize}
    \item[$\bullet$]  $\{1, \ldots, m-1, \boldsymbol{m + 1}, \boldsymbol{2m - 1}, 2m + 1\} = [1,m-1] \cup \{m+1, 2m-1\} \cup \{2m+1\}$ has $q = 3$, $\l_\a = m+1 \in G_1$ and $\l_{\a+1} = 2m-1 \in G_1$;
    \item[$\bullet$]  $\{1, \ldots, m-1, \boldsymbol{m + 1}, \boldsymbol{2m - 1}\} = [1,m-1] \cup \{m+1, 2m-1\}$ has $q = 2$, $\l_\a = m + 1 \in G_1$ and $\l_{\a+1} = 2m - 1 \in G_1$;
    \item[$\bullet$]  $\{1, \ldots, m-1, \boldsymbol{m + 1}, \boldsymbol{2m + 1}\} = [1, m-1] \cup \{m+1\} \cup \{2m + 1\}$ has $q = 3$, $\l_\a = m+1 \in G_1$ and $\l_{\a+1} = 2m + 1 \in G_2$.
  \end{itemize}
\end{example}

\noindent \textbf{The case $2g \leq 3\kappa$.} At this point, we focus on pure $\k$-sparse gapsets with genus $g$ such that $2g \leq 3\k$ which play an important role in the end of Section 5 and in Section 6.

\begin{proposition}
Let $G$ be a pure $\k$-sparse gapset with genus $g$ and depth $q$. If $2g \leq 3\k$, then $q \leq 3$.
\label{2g3kq3}
\end{proposition}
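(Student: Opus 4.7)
The plan is to chain together three inequalities already established in the paper. The key observation is that the depth is $q = \lceil c/m \rceil$, so it suffices to show $c \leq 3m$; then $c/m \leq 3$ gives $q \leq 3$.

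First, I would invoke Proposition \ref{cond}, which gives the upper bound $c \leq 2g$ on the conductor in terms of the genus. Combining this with the standing hypothesis $2g \leq 3\k$ yields $c \leq 3\k$. Next, I would apply Proposition \ref{km}, which states that $\k \leq m$ for a pure $\k$-sparse gapset. Multiplying through by $3$ gives $3\k \leq 3m$, and chaining these inequalities produces
\[
c \;\leq\; 2g \;\leq\; 3\k \;\leq\; 3m.
\]
Therefore $c/m \leq 3$, and since $q = \lceil c/m \rceil$, we conclude $q \leq 3$.

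There is no real obstacle here; the statement is essentially a bookkeeping corollary that packages together Propositions \ref{cond} and \ref{km} with the hypothesis $2g \leq 3\k$. The only subtlety worth flagging is that the bound $\k \leq m$ is sharp in the sense that it is the link which converts the genus-based bound on $c$ into a multiplicity-based bound, which is what the depth $q = \lceil c/m \rceil$ requires.
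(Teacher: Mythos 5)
Your proof is correct and follows essentially the same route as the paper's: both chain $c \leq 2g \leq 3\k \leq 3m$ using Proposition \ref{cond}, the hypothesis, and Proposition \ref{km}, then conclude $q = \lceil c/m \rceil \leq 3$ since $3$ is an integer. No differences worth noting.
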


\begin{proof}
Since $m \geq \k \geq \frac{2g}{3}$ and $c \leq 2g$, we have $q = \left \lceil \frac{c}{m} \right \rceil \leq \frac{2g}{\frac{2g}{3}} = 3$.
\end{proof}

\begin{remark}
It is important to notice that the gapset $\{1, 2, 3, 4, \textbf{6}, \textbf{9}, 11\}$ has depth $3$, genus $7$ and $\k = 3$. In this case $2g > 3\k$. Hence, the converse of Proposition \ref{2g3kq3} is not true.
\end{remark}

The next proposition tells us that, if $2g \leq 3\k$, then the set $\{i: \l_i - \l_{i-1} = \k\}$ has only the element $\a$, with only one exception.

\begin{proposition}
Let $G$ be a pure $\k$-sparse gapset with genus $g$ such that $2g \leq 3\k$. Then there is an unique $i \in [1, g-1]$ such that $\l_{i+1} - \l_i = \k$, except for $G = \{1,3,5\}$.
\label{unique}
\end{proposition}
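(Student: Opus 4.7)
My plan is to argue by contradiction. Assume there exist two indices $1 \leq i_1 < i_2 \leq g-1$ with
$$\l_{i_1+1} - \l_{i_1} \;=\; \l_{i_2+1} - \l_{i_2} \;=\; \k,$$
and try to conclude $G = \{1,3,5\}$. Note that the edge case $g \leq 2$ admits at most one consecutive pair, so this already forces $g \geq 3$, and the statement is vacuous (or trivial) for $g \leq 2$.

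The main idea is a very cheap telescoping estimate. Writing
$$\l_g - 1 \;=\; \l_g - \l_1 \;=\; \sum_{k=1}^{g-1}(\l_{k+1} - \l_k),$$
we have expressed $\l_g - 1$ as a sum of $g-1$ positive integers. Two of those summands equal $\k$ by hypothesis, and the other $g-3$ are each at least $1$, so $\l_g - 1 \geq 2\k + (g-3)$, that is,
$$\l_g \;\geq\; 2\k + g - 2.$$
The upper bound $\l_g \leq 2g-1$ from Proposition \ref{gilvan} then gives $2\k \leq g + 1$.

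Combining this with the standing hypothesis $2g \leq 3\k$ yields
$$4g \;\leq\; 6\k \;\leq\; 3(g+1),$$
so $g \leq 3$. Since the contradiction hypothesis forced $g \geq 3$, the only remaining case is $g = 3$. Here $2g \leq 3\k$ gives $\k \geq 2$ and the inequality $2\k \leq g+1 = 4$ gives $\k \leq 2$, so $\k = 2$. The two chains of inequalities are then equalities: $\l_3 = 2\k + g - 2 = 5$ and both consecutive differences equal $2$, forcing $\l_2 = 3$ and $\l_1 = 1$. Thus $G = \{1,3,5\}$, which is exactly the allowed exception.

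I do not foresee a serious obstacle: the whole argument is essentially the telescoping inequality above plus Proposition \ref{gilvan}. The one thing to be careful about is the bookkeeping of the edge cases $g \in \{1,2\}$ (where the premise of having two such indices cannot be met) and the verification that the extremal configuration produced at $g = 3$ really is the gapset $\{1,3,5\}$ and not some spurious solution; both are immediate.
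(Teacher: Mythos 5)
Your proof is correct, and its core estimate is exactly the one in the paper: telescoping the consecutive differences to get $\l_g \geq 2\k + g - 2$ and playing this against the bound $\l_g \leq 2g-1$ of Proposition \ref{gilvan}. Where you diverge is in the endgame. The paper substitutes $\k \geq 2g/3$ directly into the lower bound to get $\l_g \geq 7g/3 - 2 \geq 2g$, which only yields a contradiction for $g \geq 6$, and then disposes of the residual cases $g \leq 5$ by a GAP computation. You instead first extract the clean inequality $2\k \leq g+1$ from the two bounds on $\l_g$ and only then combine it with $2g \leq 3\k$, obtaining $4g \leq 6\k \leq 3g+3$ and hence $g \leq 3$; since two distinct indices force $g \geq 3$, only $g=3$ survives, and there the chain of inequalities collapses to equalities, pinning down $G = \{1,3,5\}$. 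This buys a fully self-contained, computation-free proof that also explains \emph{why} $\{1,3,5\}$ is the unique exception (it is the extremal configuration of the inequalities), at the cost of nothing. Your handling of the edge cases $g \leq 2$ is also fine: with at most one consecutive pair the uniqueness claim cannot fail by multiplicity.
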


\begin{proof}
Suppose that there are $i \neq j$ such that $\l_{i+1} - \l_i = \l_{j+1} - \l_j = \k$. Then $\l_g \geq 1 + 2 \cdot \k + (g - 3) \cdot 1 = 2\k + g - 2 \geq 4g/3 + g - 2 = 7g/3 - 2 \geq 2g$, if $g \geq 6$. By Proposition \ref{gilvan}, it is a contradiction. The cases $g \leq 5$ can be done by using GAP.
\end{proof}

\begin{remark}
If $2g > 3\k$, then it is possible to have a unique $i$ such that $\l_{i+1} - \l_i = \k$ and it is possible to have more than one index that satisfies that property. For instance, if $G_1 = \{1,2,3,5,6\}$, then $2g(G_1) = 10 > 3\k(G_1) = 6$ and $\k(G_1) = 5 - 3$; if $G_2$ is the hyperelliptic gapset with genus $g \geq 4$, then $2g(G_2) \geq 8 > 3\k(G_1) = 6$ and every pair of consecutive elements of $G_2$ have distance 2.
\end{remark}

For the next result we consider a pure $\k$-sparse gapset $G$ with genus $g$ such that $2g \leq 3\k$ and we use the last result about the uniqueness of a pair of consecutive elements with the greatest distance $\k$. If $q(G) = 3$ and $G_0 \cup G_1 \cup G_2$ is the canonical partition of $G$, then Corollary \ref{trescasos} ensures that $\l_\a \in G_1$ or $\l_a \in G_2$. The next result ensures that $\l_\a \in G_1$ under the hypothesis $2g \leq 3k$.

\begin{proposition}
Let $g > 1$ and $\k$ be positive integers such that $2g \leq 3\k$ and consider $G = \{\l_1 < \l_2 < \ldots < \l_\a < \l_{\a+1} = \l_\a + \k < \ldots < \l_g\}$ be a pure $\k$-sparse gapset with multiplicity $m$,  where $\a = \max\{i: \l_{i+1} - \l_i = \k\}$. Then $\l_\a \leq 2m - 1$.
\label{gilberto}
\end{proposition}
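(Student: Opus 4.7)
The plan is to split on the depth $q(G)$. The hypothesis $2g \leq 3\k$ immediately gives $q(G) \leq 3$ via Proposition \ref{2g3kq3}, so only the cases $q \in \{1, 2, 3\}$ need to be considered. If $q(G) \leq 2$, then $G \subseteq G_0 \cup G_1 \subseteq [1, 2m-1]$ and in particular $\l_\a \leq 2m-1$ is automatic. The substantive case is therefore $q(G) = 3$.

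When $q = 3$, Corollary \ref{trescasos} tells us that $\l_\a$ lies in either $G_1$ or $G_2$, so the task reduces to ruling out $\l_\a \in G_2$. I would argue by contradiction: suppose $\l_\a \geq 2m+1$. Since $\a + 1 \leq g$ by the very definition of $\a$, we have
\[
\l_g \;\geq\; \l_{\a+1} \;=\; \l_\a + \k \;\geq\; 2m + 1 + \k.
\]
Combining this with Proposition \ref{gilvan} (which gives $\l_g \leq 2g - 1$) and the hypothesis $2g \leq 3\k$ yields
\[
2m + 1 + \k \;\leq\; \l_g \;\leq\; 2g - 1 \;\leq\; 3\k - 1,
\]
which simplifies to $\k \geq m + 1$. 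This contradicts Proposition \ref{km}, and so $\l_\a \in G_1$, giving $\l_\a \leq 2m - 1$ as required.

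I do not expect any serious obstacle: the argument only chains together the depth bound from Proposition \ref{2g3kq3}, the trichotomy of Corollary \ref{trescasos}, the index bound $\l_g \leq 2g - 1$ from Proposition \ref{gilvan}, and the multiplicity bound $\k \leq m$ from Proposition \ref{km}. The only sanity check is that $\a + 1 \leq g$ so that $\l_{\a+1}$ is well defined, which is built into the very definition of $\a$ as the maximum of a set of indices $i$ for which $i + 1 \leq g$. The hypothesis $g > 1$ is used implicitly to ensure that $\a$ exists at all.
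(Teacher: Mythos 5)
Your proposal is correct and follows essentially the same route as the paper: both argue by contradiction from $\l_\a \geq 2m+1$, chaining together $\k \leq m$ (Proposition \ref{km}), $\l_g \leq 2g-1$, and the hypothesis $2g \leq 3\k$; the paper derives $\l_{\a+1} \geq 3\k+1 \geq 2g+1$ while you rearrange the same inequalities to get $\k \geq m+1$, which is just an algebraically equivalent contradiction. The preliminary case split on depth and the appeal to Corollary \ref{trescasos} are harmless but not needed, since the contradiction argument handles all cases at once.
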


\begin{proof}
The hypothesis $2g \leq 3\k$ ensures that the depth of $G$ is at most 3. Suppose that $\l_\a \geq 2m+1$. By Proposition \ref{km}, $\l_{\a+1} = \l_\a + \k \geq 2m + 1 + \k \geq 3\k +1$. Hence, $\l_{\a+1} \geq 2g + 1$, which is a contradiction.
\end{proof}

\begin{remark}
This property does not hold true in the general case, even for depth at most 3. For instance, the gapset $G = \{1,2,3,4,6,7,9,\textbf{11},\textbf{14}\}$ has $\l_\a = 11 > 9 = 2m-1$.
\end{remark}

\begin{remark}
Proposition \ref{gilberto} gives informations about $\l_\a$, but not about $\l_{\a+1}$. For instance, if $G = \{1, 2, \ldots, 10, \textbf{12}, \textbf{21}, 23\}$, then $\l_{\a+1} \leq 2m-1$; if $G = \{1,2,\textbf{4},\textbf{7}\}$, then $\l_{\a+1} > 2m-1$.
\label{2casos}
\end{remark}

\section{The map $\phi$}
\label{s4}

In this section, we introduce the map $\phi$ and we discuss which properties the set $\phi(G)$ has, when $G \in \mathcal{G}_\k(g)$. Our aim is that $\phi(G)$ is a pure $(\k+1)$-sparse gapset with genus $g+1$. Hence, we look for hypotheses that guarantee it. 

Eliahou and Fromentin \cite{EF} introduced the notion of $m$-extension, for $m \in \N$. An $m$-extension $A \subset \N$ is a finite set containing $[1,m-1]$ that admits a partition \linebreak $A = A_0 \cup A_1 \cup \ldots \cup A_t$, for some $t \in \N_0$, where $A_0 = [1,m-1]$ and $A_{i+1} \subseteq m + A_i$ for all $i$. In particular, if $A$ is an $m$-extension, then $A \cap m\N = \emptyset$. For our approach, we deal with more general sets and we define them as follows: let $m > 2$ be an integer. We say that a finite set $M \subset \N$ is an $m$-set if $[1, m-1] \subset M$ and $M \cap m\N = \emptyset$. In particular, an $m$-extension is an $m$-set. If $c$ is the largest element of an $m$-set $M$, then its depth is defined as $\left \lceil \frac{c}{m} \right \rceil$. We denote by $\mathcal{M}_\k(g)$ the set of all $m$-sets with $g$ elements, such that the maximum distance between two consecutive elements (with respect to the natural order) is $\k$ and that lies in $[1,2g - 1]$. In some cases, we also deal with subsets of $[1, 2g - 1]$ with no specific property. We denote by $\mathcal{C}_{\k}(g)$ the set of those sets that have $g$ elements and such that the maximum distance between two consecutive elements (with respect to the natural order) is $\k$. Notice that $\mathcal{G}_\k(g) \subset \mathcal{M}_\k(g) \subset \mathcal{C}_\k(g)$.

Let $g$ and $\k$ be non-negative integers and define $\phi: \mathcal{G}_{\k}(g) \to \mathcal{C}_{k+1}(g+1)$, with 

$$
G = \{1 = \l_1 < \l_2 < \cdots < \l_\a < \l_{\a+1} = \l_\a + \k < \cdots < \l_g\} \mapsto
$$
\begin{equation}
\label{mapaphi}
\mapsto \phi(G) = \{1 = \l_0 < 2 = \l_1 + 1 < \l_2 + 1 < \cdots < \l_\a + 1 < \l_{\a+1} + 2 < \cdots < \l_g + 2\},
\end{equation}
where $\a := \max\{i: \l_{i+1} - \l_i = \k\}$.

As a convention, $\phi(\emptyset) = \{1\}$ and $\phi(\{1\}) = \{1,3\}$.

\begin{theorem}
The map $\phi$ is well defined and it is injective. Moreover, if $G \in \mathcal{G}_\k(g)$ has multiplicity $m$, then 

\begin{enumerate}
  \item if $q(G) = 1$, then $\phi(G)$ is a gapset of depth $2$;
  \item if $q(G) = 2$, then $\phi(G)$ is an $(m+1)$-set of depth $2$;
  \item if $q(G) = 3$, then $\phi(G)$ is an $(m+1)$-set of depth $3$, except for the case $2m + 1 \in G$ and $\l_\a \geq 2m+1$.
\end{enumerate}
\label{phi}
\end{theorem}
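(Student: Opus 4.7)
The plan is to check the structural properties of $\phi$ first, and then split into the three depth regimes.

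For the well-definedness of $\phi$ into $\mathcal{C}_{\k+1}(g+1)$, I would enumerate the consecutive differences of $\phi(G)$: the initial step $1 \to \l_1 + 1 = 2$ has length $1$; for $2 \leq i \leq \a$ the difference $(\l_i + 1) - (\l_{i-1} + 1) = \l_i - \l_{i-1}$ is at most $\k$; the amplified step $(\l_\a + 1) \to (\l_{\a+1} + 2)$ has length $\k+1$; and for $i > \a$ the differences $\l_{i+1} - \l_i$ are strictly less than $\k$ by maximality of $\a$. Hence the largest consecutive difference in $\phi(G)$ is exactly $\k+1$, attained only once. Since $\l_g + 2 \leq 2g + 1$ by Proposition \ref{gilvan}, $\phi(G)$ consists of $g+1$ elements of $[1,2g+1]$, so $\phi(G) \in \mathcal{C}_{\k+1}(g+1)$. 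Injectivity is immediate: the unique $(\k+1)$-gap in the image pins down $\a$, and the two shifts are trivially invertible.

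For part (1), Example \ref{q=1} forces $G = [1,g]$, so $\k = 1$ and $\a = g-1$, and direct substitution gives $\phi(G) = [1,g] \cup \{g+2\}$. This is a gapset (for $z \leq g$ both summands in $z = x+y$ lie in $[1,g-1]$; for $z = g+2$ at least one summand is $\leq g$, and the corner case $(1,g+1)$ is saved by $1 \in \phi(G)$), with multiplicity $g+1$, conductor $g+3$, and hence depth $\lceil (g+3)/(g+1) \rceil = 2$.

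For (2) and (3), let $m = m(G)$. A key preliminary observation is that $\a \geq m-1$: when $q(G) \geq 2$ we have $\k \geq 2$, while $\l_1,\ldots,\l_{m-1}$ are the consecutive integers $1,\ldots,m-1$ with successive differences all equal to $1$, so any pair realizing the distance $\k$ occurs at an index $\geq m-1$. Consequently $\{\l_0\} \cup \{\l_i + 1 : 1 \leq i \leq m-1\} = [1, m] \subset \phi(G)$, the first requirement for an $(m+1)$-set. To prove the ``no multiples of $m+1$'' condition, note that $\phi(G) \subset [1, \l_g + 2]$, with $\l_g + 2 \leq 2m+1$ in case (2) and $\l_g + 2 \leq 3m+1$ in case (3); the candidate multiples of $m+1$ are therefore $m+1$ in case (2), and $m+1, 2m+2$ in case (3). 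The equations $\l_i + 1 = m+1$ (needing $m \in G$) and $\l_i + 2 = m+1$ (forcing $\l_i = m-1$ against $i > \a \geq m-1$) are both impossible; $\l_i + 2 = 2m+2$ forces $\l_i = 2m \notin G$; the remaining possibility $\l_i + 1 = 2m+2$ for some $i \leq \a$ is equivalent to $2m+1 \in G$ and $\l_\a \geq 2m+1$ — exactly the stated exception. The depth in each non-exceptional case equals $\lceil (\l_g + 2)/(m+1) \rceil$, which works out to $2$ from $m+3 \leq \l_g + 2 \leq 2m+1$ in case (2), and to $3$ from $2m+3 \leq \l_g + 2 \leq 3m+1$ in case (3).

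The main subtlety is the depth-$3$ analysis: tracking the two distinct shift amounts ($+1$ on indices $\leq \a$, $+2$ on indices $> \a$) against the two candidate multiples $m+1$ and $2m+2$, and verifying that ``$2m+1 \in G$ and $\l_\a \geq 2m+1$'' is the \emph{unique} obstruction to the $(m+1)$-set property. Corollary \ref{trescasos}, which confines $\l_\a$ to the last two blocks of the canonical partition of $G$, is the natural organizing device.
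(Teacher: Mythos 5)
Your proof is correct and follows essentially the same route as the paper's: the same verification that $\phi(G)\in\mathcal{C}_{\k+1}(g+1)$ by counting elements and consecutive differences, and the same case analysis on the depth in which the multiples of $m+1$ are identified as the only possible obstructions to the $(m+1)$-set property, with ``$2m+1\in G$ and $\l_\a\ge 2m+1$'' emerging as the unique exception. Two small points where you are in fact more careful than the paper: your injectivity argument via the uniqueness of the $(\k+1)$-gap in the image is cleaner than the paper's comparison of $\a_1$ and $\a_2$, and your explicit observation that $\a\ge m-1$ (so that $1,\dots,m-1$ are all shifted by $+1$) is the justification, left implicit in the paper, of the inclusion $[1,m]\subset\phi(G)$.
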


\begin{proof}
First we prove that $\phi(G) \in \mathcal{C}_{\k+1}(g+1)$. The maximum distance between two consecutive elements in $\phi(G)$ is $(\l_{\a+1} + 2) - (\l_a + 1) = \k + 1$ and there are $g+1$ elements in $\phi(G)$. Since $\l_g \leq 2g -1$, then $\l_g + 2 \leq 2g+1$ and $\phi(G) \subset [1, 2(g+1) - 1]$. 

Now we prove that $\phi$ is injective.  Let $G_1 = \{a_1 < a_2 < \ldots < a_{\a_1} < a_{\a_1+1} = a_{\a_1} + \k < \ldots <a_g\}$ and $G_2 =  \{b_1 < b_2 < \ldots < b_{\a_2} < b_{\a_2+1} = b_{\a_2} + \k < \ldots <b_g\} \in \mathcal{G}_{k}(g)$ such that $\phi(G_1) = \phi(G_2)$, where $\a_1 = \max\{i: a_{i+1} - a_{i} = \k\}$ and $\a_2 = \max\{i: b_{i+1} - b_{i} = \k\}$. First, we prove that $\a_1 = \a_2$. Otherwise, without loss of generality, suppose that $\a_1 < \a_2$. Then $a_n = b_n$, for all $n \in [1,\a_1] \cup [\a_2+1, g]$ and $a_{\a_2} + 2 = b_{\a_2} + 1$. Hence, $\k = b_{\a_2+1} - b_{\a_2} = a_{\a_2+1} - (a_{\a_2}+1)$, i.e., $\k+1 = a_{\a_2+1} - a_{\a_2}$ which is a contradiction. Hence, $\a_1 = \a_2$ and $G_1 = G_2$. 

If $q = 1$, then $G$ is the ordinary gapset with genus $g$ and $\phi(G) = [1,g] \cup \{g+2\}$ is a gapset with depth 2.

It remains to solve the cases $q = 2$ and $q = 3$. Let $G = \{1 = \l_1 < \l_2 < \cdots < \l_\a < \l_{\a+1} = \l_\a + \k < \cdots < \l_g\}$ with $m(G) = m$; in particular, $[1,m-1] \subset G$ and $m \notin G$ implies that $[1,m] \subset \phi(G)$ and $m+1 \notin \phi(G)$.

If $q = 2$, then $m+1 \leq \l_g \leq 2m-1$ and the maximum element of $\phi(G)$, $\l_g + 2$, is such that $(m+1) + 1 < m+3 \leq \l_g + 2 \leq 2m+1 = 2(m+1) - 1$. Thus, $2(m+1) \notin \phi(G)$, which guarantees that $\phi(G)$ is an $(m+1)$-set with depth 2.

If $q = 3$, then $2m+1 \leq \l_g \leq 3m-1$ and the maximum element of $\phi(G)$, $\l_g + 2$, is such that $2(m+1) + 1 = 2m+3 \leq \l_g + 2 \leq 3m+1 < 3(m+1) - 1$. Thus, $3(m+1) \notin \phi(G)$. If $2m+1 \notin G$, then $2m+2 \notin \phi(G)$, since $2m$ and $2m+1 \notin G$. If $2m+1 \in G$ and $\l_\a < 2m+1$, then $2m+2 \notin G$, since the corresponding element of $2m+1$ in $\phi(G)$ is $2m + 3$. Hence, $2m+2$ and $3m+3 \notin G$ and $\phi(G)$ is an $(m+1)$-set with depth 3.
\end{proof}

\begin{remark}
In general, the image of a gapset under $\phi$ is not an $m$-set. The gapset \linebreak $A =\{1,2,3,4,6,7,9,11,14\}$ has depth 3, multiplicity $5$ and its image \linebreak $\phi(A) = \{1,2,3,4,5,7,8,10,12,16\}$ is not a $6$-set. The gapset $B = \{1, 2, 4, 5, 7, 10\}$ has depth 4, multiplicity 3 and its image $\phi(B) = \{1, 2, 3, 5, 6, 8, 12\}$ is not a $4$-set.
\end{remark}

\begin{corollary}
If $G \in \mathcal{G}_\k(g)$ has multiplicity $m$ and $2g \leq 3\k$ , then $\phi(G)$ is an $(m+1)$-set.
\label{mset}
\end{corollary}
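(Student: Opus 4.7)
The plan is to deduce this corollary immediately from Theorem \ref{phi}, using the hypothesis $2g \leq 3\k$ only to eliminate the single exceptional subcase that appears in Theorem \ref{phi}(3). My first step is to invoke Proposition \ref{2g3kq3} to conclude $q(G) \leq 3$; the argument then splits according to depth. The case $q(G) = 1$ corresponds to the ordinary gapset $G = [1,g]$, where $\k = 1$ and the condition $2g \leq 3\k$ forces $g \leq 1$, leaving only the boundary inputs $G = \emptyset$ and $G = \{1\}$ which are handled by the stated conventions $\phi(\emptyset) = \{1\}$ and $\phi(\{1\}) = \{1,3\}$. The case $q(G) = 2$ is exactly Theorem \ref{phi}(2), which directly asserts that $\phi(G)$ is an $(m+1)$-set.

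The substantive case is $q(G) = 3$. Here Theorem \ref{phi}(3) establishes that $\phi(G)$ is an $(m+1)$-set except possibly when $2m+1 \in G$ and $\l_\a \geq 2m+1$ hold simultaneously. My key observation is that the second condition is incompatible with $2g \leq 3\k$: Proposition \ref{gilberto}, proved under exactly this hypothesis, yields $\l_\a \leq 2m-1 < 2m+1$. Hence the exceptional subcase is vacuous, and Theorem \ref{phi}(3) delivers the conclusion.

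The main obstacle here is not in assembling the argument, which is essentially bookkeeping between results already established, but in recognising Proposition \ref{gilberto} as the tool tailored to close precisely the gap left by Theorem \ref{phi}(3). Once that identification is made, the corollary collapses to a one-line application: the exception in Theorem \ref{phi}(3) requires $\l_\a \geq 2m+1$, which Proposition \ref{gilberto} forbids under the standing hypothesis $2g \leq 3\k$.
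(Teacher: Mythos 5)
Your proposal is correct and follows essentially the same route as the paper: Proposition \ref{2g3kq3} to bound the depth by $3$, Theorem \ref{phi}(1)--(2) for the low-depth cases, and the bound $\l_\a \leq 2m-1$ to rule out the exceptional subcase of Theorem \ref{phi}(3). In fact you cite the right tool more accurately than the paper does --- the paper's proof attributes the inequality $\l_\a \leq 2m-1$ to Proposition \ref{abaixo}, whose actual conclusion is $\l_g \leq \l_\a + m$; the statement being used is that of Proposition \ref{gilberto}, exactly as you identify.
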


\begin{proof}
Proposition \ref{2g3kq3} ensures that the depth of $G$ is at most 3. The cases of depth 1 or 2 follow from (1) and (2) of Theorem \ref{phi}. If $q = 3$, then Proposition \ref{abaixo} ensures that $\l_a \leq 2m-1$ and then (3) of Theorem \ref{phi} completes the proof.
\end{proof}

Eliahou and Fromentin \cite{EF} worked with the map $\sigma$ and for a gapset $G$ with canonical partition $G_0 \cup G_1 \cup G_2$ and multiplicity $m$. They define $\sigma(G) = (G_0 \cup \{m\}) \cup (G_1 + 1) \cup (G_2 + 2)$. In particular, $\sigma(G)$ is always a gapset, if its depth is at most 3. Notice that $g(\sigma(G)) = g(G) + 1$. The map $\phi$ considered in this paper is different from the map $\sigma$, besides the fact that both maps increase the genus by one. For instance, if $G = [1,9] \cup \{\textbf{11}, \textbf{19}\} \cup \{21\}$, then $\phi(G) = [1,10] \cup \{\textbf{12},\textbf{21}\} \cup \{23\}$ and $\sigma(G) = [1,10] \cup \{12,20\} \cup \{23\}$. More generally, let $g \geq 12$. If $G = [1,g-3] \cup \{g-1, 2g-5\} \cup \{2g-3\}$, then $\phi(G) = [1,g-2] \cup \{g,2g-3\} \cup \{2g-1\}$ and $\sigma(G) = [1,g-2] \cup \{g,2g-4\} \cup \{2g-1\}$.

Now we deal with pure $\k$-sparse gapsets with genus $g$ and depth 2 and 3, namely $\mathcal{G}_\k(g,2):= \{G \in\mathcal{G}_\k(g): q(G) = 2\}$ and $\mathcal{G}_\k(g,3):= \{G \in\mathcal{G}_\k(g): q(G) = 3\}$, respectively. Theorem \ref{phi} guarantees that gapsets with multiplicity $m$ and depth at most 3 are mapped onto $(m+1)$-sets with depth at most 3, except for the case of gapsets $G$ with depth 3, $2m + 1 \in G$ and $\l_\a \geq 2m+1$. Now we look for conditions that ensure that those $(m+1)$-sets are in fact gapsets.

The next result guarantees that every $m$-set with $q = 1$ or $2$ is a gapset.

\begin{lemma}
Let $G \subseteq [1, 2m-1]$ be an $m$-set. Then $G$ is a gapset with multiplicity $m$ and depth at most $2$.
\label{q=2}
\end{lemma}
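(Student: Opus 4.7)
The plan is to verify three things in order: that the multiplicity of $G$ is $m$, that its depth is at most $2$, and (the substantive part) that $G$ actually satisfies the gapset defining property.

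For the multiplicity, the hypothesis $[1, m-1] \subset G$ together with $m \in m\N$ (so $m \notin G$) immediately forces $m(G) = m$. For the depth bound, observe that $G \subseteq [1, 2m-1]$ implies that the conductor satisfies $c(G) \leq 2m$, so $q(G) = \lceil c(G)/m \rceil \leq 2$. Both of these are one-line observations; no obstacle here.

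The heart of the argument is showing the gapset closure property: for every $z \in G$ and every decomposition $z = x + y$ with $x, y \in \N$, at least one of $x, y$ lies in $G$. I would split into two cases according to where $z$ sits relative to $m$. If $z \in [1, m-1]$, then any decomposition $z = x+y$ with $x, y \geq 1$ forces $x, y \leq z - 1 \leq m - 2$, so both summands lie in $[1, m-1] \subset G$. If $z \in [m+1, 2m-1]$ (we cannot have $z = m$ since $G \cap m\N = \emptyset$), then the two summands cannot both be at least $m$, for otherwise $z \geq 2m$; so, up to relabeling, $x \in [1, m-1] \subset G$. This exhausts all cases, because $G \subseteq [1, 2m-1]$.

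The main obstacle is essentially non-existent: the lemma reduces entirely to the combinatorial fact that a number in $[m+1, 2m-1]$ cannot be written as a sum of two integers each $\geq m$. The only small care one needs is to note that the values $x = 0$ or $y = 0$ are excluded because $\N$ here denotes the positive integers (consistent with the definition of gapset given at the start of Section~1). With that convention in place, the case analysis above is complete and yields that $G$ is a gapset with $m(G) = m$ and $q(G) \leq 2$.
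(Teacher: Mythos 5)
Your proof is correct and follows essentially the same route as the paper's: the key observation in both is that $z \leq 2m-1$ forces the smaller summand to be at most $m-1$ and hence to lie in $[1,m-1] \subset G$. Your additional verifications of the multiplicity and depth claims, and the explicit case split, only make explicit what the paper leaves implicit.
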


\begin{proof}
Let $z \in G$ and write $z = x + y$, with $x \leq y$. Since $z \leq 2m-1$, then $x \leq m-1$. Hence, $x \in G$ and the proof is complete.
\end{proof}

\begin{theorem}
Let $g$ and $\k$ be non-negative integers and $\phi_2 := \phi \vert_{\mathcal{G}_\k(g,2)}$. Then \linebreak $\Imm(\phi_2) \subseteq \mathcal{G}_{\k+1}(g+1,2)$.
\label{q2}
\end{theorem}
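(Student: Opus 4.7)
The plan is to reduce the statement to Theorem~\ref{phi}(2) together with Lemma~\ref{q=2}, with only a small additional check to nail down the depth and the purity.

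First, I would take $G \in \mathcal{G}_\k(g,2)$ with multiplicity $m$ and apply Theorem~\ref{phi}(2) directly. This gives that $\phi(G)$ is an $(m+1)$-set of depth $2$. Depth $2$ as an $(m+1)$-set means the largest element of $\phi(G)$ is at most $2(m+1)-1 = 2m+1$, so $\phi(G) \subseteq [1, 2(m+1)-1]$. I then apply Lemma~\ref{q=2} to this $(m+1)$-set to conclude that $\phi(G)$ is a gapset with multiplicity $m+1$ and depth at most $2$.

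Next, I would verify the three remaining properties that are implicit in the target set $\mathcal{G}_{\k+1}(g+1,2)$. The genus is $g+1$, which is immediate from the construction since $|\phi(G)| = g+1$. For purity, the consecutive-distance bookkeeping built into the definition of $\phi$ shows that all consecutive distances among the shifted $\l_i$'s are preserved (hence $\leq \k$), while the single ``straddling'' distance $(\l_{\a+1}+2) - (\l_\a+1) = \k+1$ is realized; so the maximum consecutive distance in $\phi(G)$ equals $\k+1$ and is attained, confirming that $\phi(G)$ is pure $(\k+1)$-sparse. Finally, to rule out depth $1$ for $\phi(G)$, I would use that $G$ has depth exactly $2$, so $c(G) \geq m+1$ and therefore $\l_g \geq m+1$; consequently $\l_g + 2 \geq m+3 > m+1 = m(\phi(G))$, which forces $c(\phi(G)) > m(\phi(G))$ and hence $q(\phi(G)) \geq 2$. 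Together with the ``at most $2$'' from Lemma~\ref{q=2}, the depth is exactly $2$.

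I expect no real obstacle here: the theorem is essentially a packaging result. The only thing one has to keep straight is the chain of inequalities $m+1 \leq c(G) \leq 2m$ (from depth $2$ of $G$), giving $m+1 \leq \l_g \leq 2m-1$, and therefore $m+3 \leq \l_g+2 \leq 2m+1 = 2(m+1)-1$, which is exactly the window that makes $\phi(G)$ an $(m+1)$-set of depth $2$ eligible for Lemma~\ref{q=2}. Once this range check is in hand, the rest is bookkeeping.
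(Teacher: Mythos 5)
Your proposal is correct and follows essentially the same route as the paper: apply Theorem~\ref{phi}(2) to see that $\phi(G)$ is an $(m+1)$-set of depth $2$ contained in $[1,2(m+1)-1]$, then invoke Lemma~\ref{q=2} to conclude it is a gapset. The extra checks you carry out (genus $g+1$, purity with maximum gap $\k+1$, and ruling out depth $1$) are the bookkeeping the paper leaves implicit, having already established membership in $\mathcal{C}_{\k+1}(g+1)$ in the proof of Theorem~\ref{phi}.
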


\begin{proof}
Let $G' \in \Imm(\phi_2)$. Then there is $G \in \mathcal{G}_{\k}(g)$ such that $\phi_2(G) = G'$. If $m(G) = m$ and $q(G) = 2$, then Theorem \ref{phi} guarantees that $G'$ is an $(m+1)$-set with depth 2. By Lemma \ref{q=2}, we conclude that $G'$ is a gapset and $G' \in \mathcal{G}_{\k+1}(g+1,2)$.
\end{proof}

\begin{remark}
The map $\phi_2$ is injective but it is not surjective, since $[1,g] \cup \{g+2\} \notin \phi_2^{-1}(\mathcal{G}_{\k+1}(g+1,2))$. In particular, $\#\mathcal{G}_\k(g,2) < \#\mathcal{G}_{\k+1}(g+1,2)$.
\end{remark}

\begin{example}
Table \ref{table1} shows how $\phi_2$ acts.

\begin{table}
\centering
\caption{Some examples of Theorem \ref{q2} for genus 2, 3 and 4.}
\label{table1}
\begin{tabular}{c c}
\hline\noalign{\smallskip}
$G$ & $\phi_2(G)$ \\
\noalign{\smallskip}\hline\noalign{\smallskip}
$\{\textbf{1},\textbf{3}\}$ & $\{1,\textbf{2},\textbf{5}\}$ \\
$\{1,\textbf{2},\textbf{5}\}$ & $\{1,2,\textbf{3},\textbf{7}\}$ \\
$\{1,\textbf{2},\textbf{4}\}$ & $\{1,2,\textbf{3},\textbf{6}\}$ \\
$\{1,2,\textbf{3},\textbf{7}\}$ & $\{1,2,3,\textbf{4},\textbf{9}\}$ \\
$\{1,2,\textbf{3},\textbf{6}\}$ & $\{1,2,3,\textbf{4},\textbf{8}\}$ \\
$\{1,2,\textbf{3},\textbf{5}\}$ & $\{1,2,3,\textbf{4},\textbf{7}\}$ \\
$\{1,\textbf{2},\textbf{4},5\}$ & $\{1,2,\textbf{3},\textbf{6},7\}$ \\
\noalign{\smallskip}\hline
   \end{tabular}
\end{table}
\end{example}

Next, we present a result similar to Theorem \ref{q2} with an additional condition. In this case, we consider gapsets with depth 3.

\begin{theorem}
Let $g$ and $\k$ be non-negative integers such that $2g \leq 3\k$ and $\phi_3 := \phi \vert_{\mathcal{G}_\k(g,3)}$. Then $\Imm (\phi_3) \subseteq$ $\mathcal{G}_{\k+1}(g+1,3)$.
\label{q3}
\end{theorem}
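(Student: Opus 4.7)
The plan is to reduce the gapset axiom for $\phi(G)$ to arithmetic inequalities between $\l_\a$, $m$ and $\k$. First I would combine Corollary \ref{mset} with Theorem \ref{phi}(3) to establish that $\phi(G)$ is an $(m+1)$-set of depth $3$ with $g+1$ elements and maximum consecutive distance $\k+1$. Since the pair $(\l_\a + 1,\, \l_{\a+1} + 2)$ realizes distance exactly $\k+1$, the pure $(\k+1)$-sparse condition is automatic, so the only substantive point is to verify that $\phi(G)$ satisfies the gapset axiom.

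Next I would fix $z \in \phi(G)$ and a decomposition $z = x+y$ with $x, y \in \N$ and $x \leq y$, and argue that $x \in \phi(G)$ or $y \in \phi(G)$. The case $x \leq m$ is immediate from $[1,m] \subset \phi(G)$, so I focus on $x \geq m+1$. Then $y \geq m+1$, $z \geq 2(m+1)$, and because $\phi(G)$ is an $(m+1)$-set, $2(m+1) \notin \phi(G)$, forcing $z \geq 2m+3$. Proposition \ref{gilberto} gives $\l_\a \leq 2m-1$, so no element of $\phi(G)$ of the form $\l_i + 1$ with $i \leq \a$ exceeds $2m$; thus $z = \l_i + 2$ for some $i \geq \a+1$, and $\l_i = z-2 \in G_2$. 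Proposition \ref{abaixo} then yields $z \leq \l_g + 2 \leq \l_\a + m + 2$, whence $y \leq 2m$.

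The heart of the argument is to apply the gapset axiom of $G$ to the decomposition $\l_i = (x-1) + (y-1)$, which is legitimate because $x-1, y-1 \geq m \geq 2$: this gives $x-1 \in G$ or $y-1 \in G$. If $y-1 = \l_j$ with $j \leq \a$, then $y = \l_j + 1 \in \phi(G)$; symmetrically, if $x-1 = \l_k$ with $k \leq \a$, then $x \in \phi(G)$, and in both cases we are done. The main obstacle is ruling out the remaining possibilities $j \geq \a+1$ or $k \geq \a+1$, and this is where the hypothesis $2g \leq 3\k$ must be used.

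For $j \geq \a+1$, I would use $y \geq \l_{\a+1}+1 = \l_\a + \k + 1$ together with $x \geq m+1$ to get $z \geq \l_\a + m + \k + 2$, contradicting $z \leq \l_\a + m + 2$. For $k \geq \a+1$, applying the same reasoning to $x$ and combining with $y \geq x$ yields $z \geq 2\l_\a + 2\k + 2$, hence $\l_\a \leq m - 2\k$. Since depth $3$ combined with Proposition \ref{gilvan} forces $g \geq m+1$, the hypothesis $2g \leq 3\k$ gives $\k \geq (2m+2)/3$, while $\l_\a \geq 1$ gives $m \geq 2\k+1$; chaining these produces $3m \geq 4m + 7$, impossible. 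This closes all cases and establishes $\phi(G) \in \mathcal{G}_{\k+1}(g+1, 3)$.
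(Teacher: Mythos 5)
Your proof is correct and follows essentially the same route as the paper's: reduce to the case $z \geq 2m+3$ with $x,y > m$, write $z - 2 = (x-1)+(y-1) \in G$, and use Propositions \ref{abaixo} and \ref{gilberto} to force the element of $G$ produced by the gapset axiom to have index at most $\a$, so that it lands in $\phi(G)$ via the ``$+1$'' branch. The only difference is organizational: the paper splits on whether $\l_{\a+1} \leq 2m-1$ or $\l_{\a+1} \geq 2m+1$ and bounds $x-1$ and $y-1$ by $\l_\a$ up front, whereas you admit the possibility of an index $\geq \a+1$ and then eliminate it a posteriori by arithmetic contradiction; both are valid.
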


\begin{proof}
Let $G = \{\l_1 < \l_2 < \cdots < \l_\a < \l_{\a+1} = \l_\a + \k < \cdots < \l_g\} \in \mathcal{G}_k(g,3)$,  where $\a = \max\{i: \l_{i+1} - \l_i = \k\}$ and $m(G) = m$. By Corollary \ref{mset}, $\phi(G)$ is an $m$-set that lies in $\mathcal{M}_{k+1}(g+1,3)$. It remains to prove that $\phi_3(G)$ is a gapset. Consider the canonical partition of $\phi(G)$, namely $G'_0 \cup G'_1 \cup G'_2$. Let $z \in \phi(G)$ and write $z = x + y$, with $x \leq y$. We consider three cases as follows:

\begin{enumerate}
  \item[$\bullet$] $z \in G'_0 = [1, m]$. In this case, both $x$ and $y \in G'_0$. 
  \item[$\bullet$] $z \in G'_1 \subseteq [m+2, 2m+1]$. In this case, $2x \leq x + y = z \leq 2m +1$. Thus, $x \leq m$ and $x \in G'_0$.
  \item[$\bullet$] $z \in G'_2 \subseteq [2m+3, 3m+1]$. We claim that if $x \leq m$ or $y \geq 2m+1$, then $x \in G'_0$. The first case is trivial and the second one implies that $3m + 1 \geq z \geq x + 2m + 1$ and we obtain $x \leq m$. Hence it remains to show that if $x, y \in [m+1, 2m]$, with $x \leq y$, then $x$ or $y \in G'_1$. 
  \begin{itemize}
  \item Consider $\l_{\a+1} \leq 2m - 1$. In this case, we claim that $y-1 \leq \l_\a$. In fact, if $y \geq \l_\a + 2$, then $z \geq (m + 1) + (\l_\a + 2) = \l_\a + m + 3$. Thus, $G \ni z-2 \geq \l_\a + m + 1$, which is a contradiction according to Proposition \ref{abaixo}. Thus, both $x-1$ and $y-1$ are smaller than or equal to $\l_\a$ and we can write $z - 2 = (x-1) + (y-1)$. Since $z-2 \in G$ and $G$ is a gapset, we conclude that $x-1$ or $y-1 \in G$ and thus $x \in \phi(G)$ or $y \in \phi(G)$. 
   \item Consider $\l_{\a+1} \geq 2m + 1$.  We use the hypothesis $2g \leq 3\k$ in this case. By Proposition \ref{gilberto}, we have $\l_\a \leq 2m - 1$ and thus $\l_\a + 1 \in G'_1$. Hence, $z \geq \l_{\a+1}+2$. In this case, $x \leq y \leq 2m < \l_{\a+1}$ and $G \ni z - 2 = (x-1) + (y-1)$. Since $G$ is a gapset, then $x-1$ or $y-1 \in G$. Hence, $x \in \phi(G)$ or $y \in \phi(G)$.
  \end{itemize}     
\end{enumerate}
\end{proof}

\begin{remark}
In general, $\phi(G)$ is not a gapset, even if the depth of $G$ is 3 and $\phi(G)$ is an $m$-set. The gapset $G =\{1,\ldots, 9, 12, 13, 14, 17, 18, 23, 28\}$ has depth 3 and multiplicity 10 and its image $\phi(G) = \{1,\ldots, 10, 13, 14, 15, 18, 19, 24, 30\}$ is an $11$-set, but it is not a gapset. There are some examples such as $G = \{1,2,4,5,7\}$ such that $\phi(G) = \{1,2,3,5,6,9\}$ is also a gapset, even when $2g > 3\k$.
\end{remark}

\begin{example}
Table \ref{table2} shows how $\phi_3$ acts.

\begin{table}
\centering
\caption{Some examples of Theorem \ref{q3} for genus 3, 4, 5 and 6.}
\label{table2}
  \begin{tabular}{c c}
\hline\noalign{\smallskip}
$G$ & $\phi_3(G)$ \\
\noalign{\smallskip}\hline\noalign{\smallskip}
$\{1,\textbf{3},\textbf{5}\}$ & $\{1,2,\textbf{4},\textbf{7}\}$ \\
$\{1,2,\textbf{4},\textbf{7}\}$ & $\{1,2,3,\textbf{5},\textbf{9}\}$ \\
$\{1,2,3,\textbf{5},\textbf{9}\}$ & $\{1,2,3,4,\textbf{6},\textbf{11}\}$ \\
$\{1,2,3,4,\textbf{6},\textbf{11}\}$ & $\{1,2,3,4,5,\textbf{7},\textbf{13}\}$ \\
$\{1,2,3,5,\textbf{6},\textbf{10}\}$ & $\{1,2,3,4,6,\textbf{7},\textbf{12}\}$ \\
$\{1,2,3,5,\textbf{7},\textbf{11}\}$ & $\{1,2,3,4,6,\textbf{8},\textbf{13}\}$ \\
$\{1,2,3,6,\textbf{7},\textbf{11}\}$ & $\{1,2,3,4,7,\textbf{8},\textbf{13}\}$ \\
\noalign{\smallskip}\hline
   \end{tabular}
\end{table}
\end{example}

\section{On the cardinality of $\mathcal{G}_\k(g)$, for $2g \leq 3\k$}
\label{s5}

In this section, we present the main result of this paper.

\begin{theorem}
Let $g$ and $\k$ be non-negative integers such that $2g \leq 3\k$. Then the map $\varphi: \mathcal{G}_\k(g) \to \mathcal{G}_{\k+1}(g+1), G \mapsto \varphi(G) = \phi(G)$ is bijective.
\label{main}
\end{theorem}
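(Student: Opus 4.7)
The map $\varphi = \phi$ is known to be injective by Theorem~\ref{phi}, and under the hypothesis $2g \leq 3\k$ it takes values in $\mathcal{G}_{\k+1}(g+1)$: indeed Proposition~\ref{2g3kq3} forces $q(G) \leq 3$, and then Theorems~\ref{q2} and~\ref{q3} (together with the depth-$1$ case in Theorem~\ref{phi}(1)) confirm that $\varphi(G)$ is a gapset of the required type in each depth. The content of Theorem~\ref{main} is therefore surjectivity, and the plan is to write down an explicit inverse $\psi$.

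Given $H = \{h_0 < h_1 < \cdots < h_g\} \in \mathcal{G}_{\k+1}(g+1)$, the condition $2g \leq 3\k$ implies $2(g+1) \leq 3(\k+1)$, so Proposition~\ref{unique} applied to $H$ singles out a unique index $\beta$ with $h_{\beta+1} - h_\beta = \k+1$; the exceptional set $\{1,3,5\}$ of that proposition is ruled out because its parameters violate $2g \leq 3\k$. I then set
$$\psi(H) := \{h_1 - 1,\, h_2 - 1,\, \ldots,\, h_\beta - 1,\, h_{\beta+1} - 2,\, \ldots,\, h_g - 2\}.$$
Routine checks---using $h_g \leq 2g+1$, the fact that $h_1 = 2$ when $m(H) \geq \k+1 \geq 3$, and the uniqueness of $\beta$---show that $\psi(H)$ has $g$ elements, lies in $[1, 2g-1]$, is pure $\k$-sparse with the newly created pair $\l_\beta, \l_{\beta+1}$ realizing the distance $\k$, and satisfies $\varphi(\psi(H)) = H$. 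The edge cases $\k \in \{0,1\}$ are dispatched separately by direct inspection.

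The substantive task is to verify that $\psi(H)$ is itself a gapset. I plan to split on $q(H)$, which is at most $3$ by Proposition~\ref{2g3kq3} applied to $H$. The case $q(H) = 1$ degenerates to $H = \{1\}$ and $\psi(H) = \emptyset$. For $q(H) = 2$ I would show $\psi(H) \subseteq [1, 2m(\psi(H)) - 1]$ using $h_g \leq 2m(H) - 1$ and the identity $m(\psi(H)) = m(H) - 1$ (valid when $\k \geq 2$; the ordinary subcase $\k = 1$ is immediate), after which Lemma~\ref{q=2} finishes the argument. For $q(H) = 3$ I would mirror the casework in the proof of Theorem~\ref{q3}: for $z \in \psi(H)$ with $z = x + y$, translate to a decomposition of the corresponding element $z+1$ or $z+2$ of $H$, and use the gapset property of $H$ combined with Propositions~\ref{abaixo} and~\ref{gilberto} applied to $H$ to force the resulting summands into $H$ at indices that place $x$ or $y$ into $\psi(H)$.

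The main obstacle is the depth-$3$ step. The subtle point is that $x+1 \in H$ translates to $x \in \psi(H)$ only when its index in $H$ is at most $\beta$, whereas $x+2 \in H$ translates to $x \in \psi(H)$ only when its index exceeds $\beta$, and the decompositions supplied by $H$'s gapset property do not a priori respect this distinction. A preparatory strengthening I would establish is that, under $2g \leq 3\k$, one actually has $h_\beta \leq 2m(H) - 2$ (improving Proposition~\ref{gilberto} by one): if $h_\beta$ were equal to $2m(H) - 1$, then decomposing $h_{\beta+1} = 2m(H) + \k$ in $H$ via the pairs $(m(H) + i,\, m(H) + \k - i)$ would force at least $\lfloor \k/2 \rfloor + 1$ elements of $H$ into $[m(H) + 1, 2m(H) - 1]$, pushing $|H| = g + 1$ past the bound that $2g \leq 3\k$ imposes. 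This strengthening prevents $\l_\beta$ from coinciding with a multiple of $m(\psi(H))$ and is what makes the translation between the gapset properties of $H$ and $\psi(H)$ go through.
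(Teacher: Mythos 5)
Your plan follows the paper's proof essentially step for step: the same reduction to surjectivity (injectivity from Theorem~\ref{phi}, well-definedness from Theorems~\ref{q2} and~\ref{q3}), the same explicit inverse (drop the element $1$, shift the elements up to the distinguished index down by $1$ and the rest down by $2$), and the same canonical-partition casework to verify that the candidate preimage is a gapset, including the same key refinement $h_\beta \leq 2m(H)-2$ that the paper also establishes while checking $2(m-1)\notin G'$ (there via $m(H)\geq \k+1$ and $h_{\beta+1}\leq 2(g+1)-1$ rather than your counting argument). The approach is correct and matches the paper's.
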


\begin{proof}
First of all, notice that the hypothesis $2g \leq 3\k$ implies that $\mathcal{G}_\k(g)$ has only the gapsets $\emptyset, \{1\}$ and gapsets with depth 2 or 3. Theorem \ref{phi} guarantees that $\varphi$ is injective; Theorems \ref{q2} and \ref{q3} guarantee that $\varphi$ is well defined. Hence, we only have to prove that $\varphi$ is surjective. 

Let $G \in \mathcal{G}_{k+1}(g+1)$ such that $2g \leq 3\k$. If $G = \{1\}$, then $G' = \emptyset$ is such that $\phi(G') = G$ and if $G = \{1,3\}$, then $G' = \{1\}$  is such that $\phi(G') = G$ . Hence, we can assume that $G$ has genus $g + 1 \geq 3$.

Write $G = \{\l_1 < \l_2 < \l_3 < \cdots < \l_\a < \l_{\a+1} = \l_\a + \k + 1 < \ldots < \l_{g+1}\}$,  where $\a = \max\{i: \l_{i+1} - \l_i = \k\}$ and assume that its multiplicity is $m$. Since $2g \leq 3\k$ then $2(g+1) \leq 3\k + 2 < 3(\k+1)$. In particular $G$ has depth at most 3 and thus $\l_{g+1} \leq 3m - 1$. Proposition \ref{unique} ensures that $\a$ is the unique index such that $\l_{\a+1} - \l_{\a} = \k + 1$ (recall $\{1,3,5\} \notin \mathcal{G}_{\k+1}(g+1)$). We show that $G' = \{\l_2 - 1 < \l_3 - 1 < \cdots < \l_\a - 1 < \l_{\a+1} - 2 < \ldots < \l_{g+1} - 2\} \in \mathcal{G}_{\k}(g)$ and satisfies $\varphi(G') = G$. 

Naturally, $\#G' = g$, the maximum distance between two consecutive elements of $G'$ is $(\l_{\a+1} - 2) - (\l_\a - 1) = \k$ and the first positive integer that does not belong to $G'$ is $m - 1$.  Now we prove that $G'$ is an $(m-1)$-set:

\begin{itemize}
  \item[$\bullet$] $m-1 \notin G'$ \\
Since $\l_j = j, \forall j \in [1,m-1]$, then $G' \ni \l_j - 1 = j - 1, \forall j \in [2,m-1]$; we also have $\l_m \geq m + 1$. We study two cases, as follows: (1) if $\l_m \geq m+2$, then $\l_m - 1 > \l_m - 2 \geq m$. In this case, we obtain that $m-1 \notin G'$; (2) suppose that $\l_m = m + 1$. If there is $j \in [m, g]$ such that $\l_{j+1} - \l_j \geq 2$, then $G' \ni \l_m - 1 = m$ and thus $m-1 \notin G'$. Otherwise, we can write $\l_j = j + 1, \forall j \in [m, g]$ and the maximum distance between two consecutive elements of $G$ is $\k + 1= 2$, which cannot occur ($2g > 3$). Hence, $m-1 \notin G'$.
  \item[$\bullet$] $2(m-1) \notin G'$  \\
Suppose that $2m - 2 \in G'$. In this case, we could have $2m \in G$ or $2m - 1 \in G$. The first case does not occur, because $m$ is the multiplicity of $G$. In the second case, we have  $2m - 1 \leq \l_\a$ (otherwise, the corresponding element would be $(2m - 1) - 2 = 2m - 3)$. By Proposition \ref{gilberto}, we have that $\l_\a = 2m - 1$. Thus, $\l_{\a+1} = \l_\a + (\k + 1) = 2m - 1 + \k + 1 \geq 2(\k+1) + \k = 3\k + 2 \geq 2(g+1)$, which is a contradiction. Hence, $2m - 2 \notin G'$.
  \item[$\bullet$] $3(m-1) \notin G'$ \\
We already know that $\l_{g+1} \leq 3m - 1$ and thus the maximum element of $G'$ is such that $\l_{g+1} - 2 \leq 3m - 3$. Hence, we only have to show that $\l_{g+1} \neq 3m - 1$ which implies that $\l_{g+1} - 2 \leq 3m - 4$. Suppose that $\l_{g+1} = 3m - 1$. In this case, we have $2m - 1 \in G$ and we could have $\l_\a < 2m - 1$ or $\l_\a = 2m - 1$. The first case implies that $\l_{\a+1} \leq 2m - 1$. By Proposition \ref{gilberto_partition}, we can denote by $\l_r$ and $\l_{r+1}$ the biggest element of $G$ that is smaller than or equal to $\l_{\a} + m$ and the smallest element of $G$ that is bigger than or equal to $\l_{\a+1} + m$, respectively. In this case, $\l_{r+1} - \l_r \geq \k$, which is a contradiction, according to Proposition \ref{unique} (notice that $G \neq \{1,3,5\}$). In the second case, we obtain $\l_{\a+1} = \l_\a + (\k + 1) = 2m - 1 + \k + 1 \geq 2(\k+1) + \k = 3\k + 2 \geq 2(g+1)$, which is a contradiction. Hence, $3m - 3 \notin G'$.
\end{itemize}

Hence, we can write $G' = G'_0 \cup G'_1 \cup G'_2$, with $G'_0 = [1,m-2]$, $G'_1 \subseteq [m, 2m-3]$ and $G'_2 \subseteq [2m-1, 3m-4]$. Now we prove that $G'$ is a gapset. Let $z \in G'$ and write $z = x + y $, with $x, y \in \N$ and $x \leq y$.

\begin{itemize}
  \item If $z \in G'_0$, then $x$ and $y \in G'_0$. 
  \item If $z \in G'_1$, then $z \leq 2m - 3$ and $2x \leq 2m - 3$. Thus $x \in G'_0$.
  \item Let $z \in G'_2$. If $x \leq m - 2$, then $x \in G'_0$. If $y \geq 2m-1$, then $x \leq m - 3$ and $x \in G'_0$. Hence, we can consider $x, y \in [m, 2m - 3]$. Notice that $\l_\a \leq 2m - 2$ (if $\l_\a = 2m - 1$, then $2m - 2 \in G'$, which does not occur). In particular, $\l_\a - 1 \leq 2m - 3$ and $z \geq \l_{\a+1} - 2$. Thus $z = \l_t - 2$, for some $t \in [\a+1, g+1]$. We claim that $y < \l_\a$. Otherwise, $z = x + y \geq m + \l_\a$ and it implies that $\l_t > \l_\a + m$, which is a contradiction according to Proposition \ref{abaixo}. Hence, $x \leq y < \l_\a$. By writing $\l_t - 2 = z = x + y$, i.e., $\l_t = (x+1) + (y+1)$ and using that $G$ is a gapset, we conclude that $x+1 \in G$ or $y+1 \in G$. Hence, $x \in G'$ or $y \in G'$.
\end{itemize}

\end{proof}

\begin{corollary}
Let $g$ and $\k$ be non-negative integers such that $2g = 3\k$. Then $\#\mathcal{G}_{\k}(g) = \#\mathcal{G}_{k+n}(g+n)$, for all $n \in \N$.
\label{mainc}
\end{corollary}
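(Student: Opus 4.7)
The plan is a short induction on $n$, using Theorem \ref{main} as the inductive engine. The only thing that really needs checking is that the hypothesis $2g \leq 3\k$ propagates along the chain.

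First I would handle the base case $n=0$, which is trivial: both sides equal $\#\mathcal{G}_\k(g)$. For the inductive step, assume $\#\mathcal{G}_\k(g) = \#\mathcal{G}_{\k+n}(g+n)$ has been established. The key observation is that the hypothesis $2(g+n) \leq 3(\k+n)$ holds for every $n \in \N_0$ whenever $2g = 3\k$: indeed, $2(g+n) - 3(\k+n) = (2g - 3\k) - n = -n \leq 0$. Therefore Theorem \ref{main}, applied with genus $g+n$ and parameter $\k+n$, produces a bijection
\[
\varphi: \mathcal{G}_{\k+n}(g+n) \longrightarrow \mathcal{G}_{\k+n+1}(g+n+1),
\]
which yields $\#\mathcal{G}_{\k+n}(g+n) = \#\mathcal{G}_{\k+n+1}(g+n+1)$. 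Combining with the inductive hypothesis gives $\#\mathcal{G}_\k(g) = \#\mathcal{G}_{\k+n+1}(g+n+1)$, completing the induction.

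There is essentially no obstacle here: the whole content lies in Theorem \ref{main}, and the corollary amounts to iterating that bijection. The only small point worth writing out explicitly is the propagation of the inequality, since Theorem \ref{main} requires $2g \leq 3\k$ at each step, and the equality $2g = 3\k$ in the hypothesis is exactly what guarantees that the weaker inequality $2(g+n) \leq 3(\k+n)$ persists (with slack growing by one at each step). In particular, starting strictly from $2g < 3\k$ would work just as well, which explains why the statement could have been phrased with $\leq$ rather than equality.
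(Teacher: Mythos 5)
Your proof is correct and follows essentially the same route as the paper: induction on $n$, with the observation that $2(g+n) \leq 3(\k+n)$ persists (indeed with growing slack) so that Theorem \ref{main} can be applied at each step. Your closing remark that the hypothesis could be relaxed to $2g \leq 3\k$ is also accurate and consistent with how the result is quoted elsewhere in the paper.
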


\begin{proof}
We proceed by induction on $n$. Theorem \ref{main} ensures that $\#\mathcal{G}_{\k}(g) = \#\mathcal{G}_{k+1}(g+1)$, since $2g = 3\k$. Notice that $2(g+j) \leq 3(\k + j)$, for all $j \in \N_0$ and we can apply Theorem \ref{main} again: the map $\mathcal{G}_{\k + n - 1}(g + n - 1) \to \mathcal{G}_{\k+n}(g+n), G \mapsto \phi(G)$ is bijective, which completes the proof.
\end{proof}

Next, we present instances of pure $\k$-sparse gapsets, with large $\k$.

\begin{example}
Let $g \geq 0$. The unique pure $g$-sparse gapset with genus $g$ is $[1,g-1] \cup \{2g-1\}$.
\end{example}


\begin{example}
Let $g \geq 3$. The two pure $(g-1)$-sparse gapsets with genus $g$ are $[1,g-2] \cup \{g, 2g-1\}$ and $[1,g-1] \cup \{2g-2\}$.
\end{example}

\begin{example}
Let $g \geq 6$. The five pure $(g-2)$-sparse gapsets with genus $g$ are:
\begin{itemize}
  \item[$\bullet$] $[1,g-1] \cup \{2g-3\}$
  \item[$\bullet$] $[1,g-2] \cup \{2g-4,2g-3\}$
  \item[$\bullet$] $[1,g-3] \cup \{g-1,g, 2g-2\}$
  \item[$\bullet$] $[1,g-3]\cup \{g,g+1, 2g-1\}$ 
  \item[$\bullet$] $[1,g-3] \cup \{g - 1, g+1, 2g-1\}$
\end{itemize}
\end{example}

We end up this section presenting Table \ref{g,k}, which illustrates our results. The values ware obtained using the package \texttt{numericalsgps} \cite{GAP} in GAP.

\begin{landscape}
\begin{table}[h!]
\caption{A few values for $\#\mathcal{G}_{\k}(g)$. The entries in bold correspond to the case $2g = 3\kappa$.}
\label{g,k}
\begin{tabular}{|c|c c c c c c c c c c c c c c c c c c c c| c|}
 \hline
\diagbox[height=0.6cm]{$g$}{$\k$} & $0$ & $1$ & $2$ & $3$ &$4$ & $5$ & $6$ & $7$ & 8 & 9 & 10 & 11 & 12 & 13 & 14 & 15 & 16 & 17 & 18 & 19 & $n_g$ \\
\hline
0 & \textbf{1} &  &  &  &  &  &  &  &  &  &  &  &  &  &  &  &  &  &  &  & 1 \\ 
\hline
1 &  & 1 &  &  &  &  &  &  &  &  &  &  &  &  &  &  &  &  &  &  & 1 \\
\hline
2 &  & 1 & 1 &  &  &  &  &  &  &  &  &  &  &  &  &  &  &  &  &  & 2 \\
\hline
3 &  & 1 & \textbf{2} & 1  &  &  &  &  &  &  &  &  &  &  &  &  &  &  &  &  & 4 \\
\hline
4 &  & 1 & 3 & 2 & 1 &  &  &  &  &  &  &  &  &  &  &  &  &  &  &  & 7 \\
\hline
5 &  & 1 & 5 & 3 & 2 & 1 &  &  &  &  &  &  &  &  &  &  &  &  &  &  & 12 \\
\hline
6 &  & 1 & 7 & 7 & \textbf{5} & 2 & 1 &  &  &  &  &  &  &  &  &  &  &  &  &  & 23 \\
\hline
7 &  & 1 & 10 & 12 & 8 & 5 & 2 & 1 &  &  &  &  &  &  &  &  &  &  &  &  & 39 \\
\hline
8 &  & 1 & 15 & 18 & 17 & 8 & 5 & 2 & 1 &  &  &  &  &  &  &  &  &  &  &  & 67 \\
\hline
9 &  & 1 & 20 & 31 & 28 & 18 & \textbf{12} & 5 & 2 & 1 &  &  &  &  &  &  &  &  &  &  & 118 \\
\hline
10 &  & 1 & 27 & 51 & 49 & 34 & 22 & 12 & 5 & 2 & 1 &  &  &  &  &  &  &  &  &  & 204 \\
\hline
11 &  & 1 & 38 & 78 & 87 & 57 & 40 & 22 & 12 & 5 & 2 & 1 &  &  &  &  &  &  &  &  & 343 \\
\hline
12 &  & 1 & 51 & 125 & 147 & 100 & 76 & 42 & \textbf{30} & 12 & 5 &  2 & 1 &  &  &  &  &  &  &  & 592 \\
\hline
13 &  & 1 & 70 & 195 & 237 & 177 & 134 & 83 & 54 & 30 & 12 & 5 & 2 & 1 &  &  &  &   &  &   & 1001 \\
\hline
14 &  & 1 & 95 & 297 & 399 & 309 & 239 & 150 & 99 & 54 & 30 & 12 & 5 & 2 & 1 &  &  &  &  &  & 1693 \\
\hline
15 &  & 1 & 128 & 457 & 654 & 530 & 422 & 259 & 183 & 103 & \textbf{70} & 30 & 12 & 5 & 2 & 1 &  &  &  &  & 2857 \\
\hline
16 &  & 1 & 172 & 705 & 1061 & 902 & 723 & 452 & 336 & 199 & 135 & 70 & 30 & 12 & 5 & 2 & 1 &  &  &  & 4806 \\
\hline
17 &  & 1 & 230 & 1074 & 1717 & 1513 & 1248 & 811 & 590 & 363 & 243 & 135 & 70 & 30 & 12 & 5 & 2 & 1 &  &  & 8045 \\
\hline
18 &  & 1 & 309 & 1621 & 2777 & 2535 & 2148 & 1411 & 1037 & 646 & 444 & 251 & \textbf{167} & 70 & 30 & 12 & 5 & 2 & 1 &  & 13467 \\
\hline
19	  & & 1 & 413 & 2448 & 4464 & 4232 & 3636 & 2434 & 1810 & 1124 & 804 & 480 & 331 & 167 & 70 & 30 & 12 & 5 & 2 & 1 & 22464 \\
\hline
\end{tabular}
\end{table}
\end{landscape}

\section{Further questions}

A natural question that arises from Theorem \ref{main} is about the existence of an injective map from $\mathcal{G}_\k(g)$ to $\mathcal{G}_{\k+1}(g+1)$ where $\k$ and $g \in \N_0$ and with no other hypothesis. The values obtained in Table \ref{g,k} point out that it is possible that it occurs. Besides, those values indicate that it is possible to have an injective map from $\mathcal{G}_\k(g)$ to $\mathcal{G}_{\k}(g+1)$. We can write those questions as follows:

\begin{question}
Let $g$ and $\k$ be non-negative integers. Is there an injective map \linebreak $\mathcal{G}_\k(g) \to \mathcal{G}_{\k+1}(g+1)$?
\end{question}

\begin{question}
Let $g$ and $\k$ be non-negative integers. Is there an injective map \linebreak $\mathcal{G}_\k(g) \to \mathcal{G}_{\k}(g+1)$?
\end{question}

Notice that a positive answer to any of them implies that the sequence $(n_g)$ is increasing for $g \in \N$.

Another question that arises is about the behaviour of the sequence $(g_w)$, where $g_w := \#\mathcal{G}_{2w}(3w)$. In fact, Corollary \ref{mainc} ensures that $\#\mathcal{G}_{\k}(g) = \#\mathcal{G}_{\k+n}(g+n)$, for all $n \in \N$, as can be seen in Table \ref{g,k} where columns progressively stabilize below the diagonal. Here are the first few values of the sequence $(g_w)$:
$$(g_0, g_1, g_2, g_3, g_4, g_5, g_6) = (1, 2, 5, 12, 30, 70, 167).$$ 
Hence, it could be of interest to know information about the sequence, which was unknown at OEIS before this work, but which is now listed there as sequence A348619.

\begin{table}
\centering
\caption{Some sequences related to $(g_w)$}
\begin{tabular}{c c c c}
\hline
$w$ & $g_w$ & $\frac{g_{w}}{g_{w-1}}$ & $\frac{\sum_{i=0}^{w} g_{i}}{g_{w}}$ 
\\
\noalign{\smallskip}\hline\noalign{\smallskip}
0 & 1 & $-$ & 1  \\
\hline
1 & 2 & 2.000 & 1.5  \\
\hline
2 & 5 & 2.500 & 1.6 \\
\hline
3 & 12 & 2.400 & 1.66 \\
\hline
4 & 30 & 2.500 & 1.66 \\
\hline
5 & 70 & 2.333 & 1.714 \\
\hline
6 & 167 &  2.386 & 1.719 \\
\hline
7 & 395 & 2.365 & 1.727 \\
\hline
8 & 936 & 2.370 & 1.729 \\
\hline
9 & 2212 & 2.363 & 1.731 \\
\noalign{\smallskip}\hline
   \end{tabular}
\end{table}

\begin{question}
Are there remarkable properties about the sequence $(g_w)$?
\end{question}

\textbf{Acknowledgements.} The authors thank to the anonymous referee and to Nathan Kaplan for their careful corrections, suggestions and comments that allowed to improve the last version of the paper.

%
%



\end{document}